\documentclass[review]{siamonline250211}

\usepackage[utf8]{inputenc}
\usepackage[english]{babel}
\usepackage{bm}
\usepackage{bbm}
\usepackage{placeins} 
\usepackage[
  a4paper, 
  width=150mm, 
  top=25mm, 
  bottom=25mm, 
  bindingoffset=6mm]{geometry}
\usepackage{changepage}  
\usepackage{fancyhdr} 
\usepackage{setspace}  
\usepackage{tikz}
\usepackage{graphicx}
  \graphicspath{{images/}}
\usepackage{adjustbox}  
\usepackage{booktabs}
\usepackage{tabularx}
\usepackage{ragged2e}
\usepackage{enumerate}
\usepackage{caption}
\usepackage{placeins}
\usepackage{amssymb}
\usepackage{amsmath}
\usepackage{mathtools}
\usepackage{nicematrix}
\usepackage{scalerel}
\usepackage{stackengine}
\usepackage{algorithm}
\usepackage{algorithmicx}
\usepackage[noend]{algpseudocode}
\usepackage{listings}

\usepackage{xcolor}
  \definecolor{codegreen}{rgb}{0,0.6,0}
  \definecolor{codegray}{rgb}{0.5,0.5,0.5}
  \definecolor{codepurple}{rgb}{0.58,0,0.82}
  \definecolor{backcolour}{rgb}{0.95,0.95,0.92}
\usepackage{csquotes}

\DeclareMathOperator*{\argmax}{arg\,max}
\DeclareMathOperator*{\argmin}{arg\,min}
\DeclareMathOperator*{\RayTran}{\mathcal{X}}
\newcommand{\vecOp}[1]{\boldsymbol{#1}}
\newcommand{\matOp}[1]{\mathsf{#1}}
\newcommand{\Mlines}{\mathbb{M}}
\newcommand{\DynLines}{\mathbb{W}}
\newcommand{\Mdomain}{\mathbb{D}}
\newcommand{\be}{\begin{equation}}
\newcommand{\ee}{\end{equation}}
\newcommand{\bea}{\begin{equation*}}
\newcommand{\eea}{\end{equation*}}
\newcommand{\bi}{\begin{itemize}}
\newcommand{\ei}{\end{itemize}}

\newcommand{\SO}{\operatorname{SO}}
\newcommand{\SE}{\operatorname{SE}}
\newcommand{\Aff}{\operatorname{Aff}}
\newcommand{\GL}{\operatorname{GL}}

\newcommand{\R}{\mathbb{R}}
\newcommand{\I}{\mathbb{I}}

\makeatletter
  \NewCommandCopy\@@pmod\pmod
  \DeclareRobustCommand{\pmod}{\@ifstar\@pmods\@@pmod}
  \def\@pmods#1{\mkern4m\matOp{U}({\operator@font mod}\mkern 6mu#1)}
\makeatother
\stackMath
\newcommand\reallywidehat[1]{%
\savestack{\tmpbox}{\stretchto{%
  \scaleto{%
    \scalerel*[\widthof{\ensuremath{#1}}]{\kern-.6pt\bigwedge\kern-.6pt}%
    {\rule[-\textheight/2]{1ex}{\textheight}}
  }{\textheight}%
}{0.5ex}}%
\stackon[1pt]{#1}{\tmpbox}%
}
\makeatletter
  \def\BState{\State\hskip-\ALG@thistlm}
\makeatother

\allowdisplaybreaks
\numberwithin{equation}{section}

\newtheorem{remark}{Remark}

\lstdefinestyle{mystyle}{
    backgroundcolor=\color{backcolour},   
    commentstyle=\color{codegreen},
    keywordstyle=\color{blue},
    numberstyle=\tiny\color{codegray},
    stringstyle=\color{codegreen},
    basicstyle=\footnotesize,
    breakatwhitespace=false,         
    breaklines=true,                 
    captionpos=b,                    
    keepspaces=true,                 
    numbers=left,                    
    numbersep=5pt,                  
    showspaces=false,                
    showstringspaces=false,
    showtabs=false,                  
    tabsize=2
}
\nolinenumbers

\headers{Motion-enabled tomography via Gaussian mixture models}{D. Burrows, and C. E. Yarman, and O. {\"O}ktem}
\title{Motion-Enabled Tomography via Gaussian Mixture Models\thanks{Submitted to the editors 15th May 2026. The authors declare they have no conflict of interest.}}
\author{Daniel Burrows\thanks{University of Bath (\email{dwb26@bath.ac.uk}), with financial support from the Institute for Mathematical Innovation,}
\and Can Evren Yarman\thanks{SLB (\email{CYarman@slb.com}),}
\and Ozan {\"O}ktem\thanks{KTH Royal Institute of Technology \email{ozan@kth.se}).}
}

\begin{document}
\maketitle

\begin{abstract}
Recovering physical properties of objects in motion is a core task across scientific and industrial applications. When the relative motion between the object and the sensing apparatus provides sufficient angular coverage, Computerized Tomography offers a powerful means of reconstruction. For such scenarios, we propose a parametric spatiotemporal model applied to Gaussian Mixture Models (GMM), in which each constituent Gaussian is parameterized by its own angular velocity, projectile motion, and geometry. GMM are a suitable means of reconstruction because they (i) admit accurate approximations in object space and (ii) have a closed form expression under the ray transform; enabling efficient forward predictions and exact gradient computations in data space. By decoupling the reconstruction problem into two sub-inverse problems, we characterize solutions as minimizers of task-specific objective functions that are derived and solved by utilizing the properties of (ii). The resulting algorithm we provide is applicable to objects in Euclidean space of arbitrary dimension. We validate the method on a simulated 2D problem, achieving accurate reconstruction of a $5$-Gaussian GMM with intersecting trajectories. This also provides a foundation for further experimentation in settings with noisy data, 3D objects, and non-rigid body dynamics.

\end{abstract}
\begin{keywords}
Computerized tomography, Gaussian mixture model, Radon transform, object reconstruction.
\end{keywords}
\begin{MSCcodes}
    65R10, 44A12, 65J20, 65J22, 94A08.
\end{MSCcodes}

\section{Introduction}
Tomography refers to imaging technologies that enable visualization of the internal structure of objects in a non-destructive manner. 
The basic idea is to repeatedly probe the object with waves or particles from different directions, measure how they are altered after passing through or interacting with the object, and use these measurements to reconstruct a cross-sectional or three-dimensional image of its interior.
In many applications, it is however desirable to recover a model of the object. 
A ``model'' refers here to a representation of the object that goes beyond the traditional image format, which typically consists of an array of pixel or voxel values.
Next, some applications involve tomographic imaging of objects that move during the data acquisition.
If the motion is unknown, then it needs to be estimated.

An example of tomographic imaging that involves both model building and motion estimation is cryo-EM single-particle analysis, where tomographic imaging principles are used to reconstruct 3D molecular structures from a series of 2D transmission electron microscope images. 
Assuming homogeneous particles, the reconstruction problem amounts to recovering the atomic structure of a protein (model) from a collection of 2D electron microscopy images. 
Each 2D image represents tomographic projection data of the protein in an unknown 3D orientation, so the process can be understood as tomographic imaging of a protein that undergoes unknown rotations (motion) during data acquisition.
Other examples of such imaging problems arise in industrial tomographic imaging where it is crucial to acquire data quickly while keeping the instrumentation as simple and cost-efficient as possible. One way to reduce costs is to use a fixed source–detector setup and allow the object to move through the field of view. Ideally, this motion is controlled; however, in low-cost scenarios it may be uncontrolled, e.g., a freely falling object undergoing rotation. This calls for joint motion estimation and reconstruction.
Additionally, one can also include model building to represent the objects being imaged in a more appropriate way.
The use case we have as the underlying motivation for the work in this paper comes from industrial tomography.
It deals with computerized tomography (CT) imaging for subsurface characterization.
The instrumental setup leads to tomographic reconstruction of particles that undergo unknown motions.
We are primarily interested in recovering the morphological shape of the particles, so they can be modeled by a Gaussian mixture model (GMM). 
Sections~\ref{sec:SubSurfChar}--\ref{sec:SubSurfCharCT} provide further details and Section~\ref{sec:SubSurfCharChall} outlines the challenges associated with this use case.

The common approach for tomographic reconstruction with a down-stream post-processing step, like model building, is sequential: first reconstruct an image of the object and its internal structure, then use this image as input to a separate post-processing step that builds the desired model. 
Alternatively, one can integrate the reconstruction and post-processing steps, i.e., jointly perform reconstruction and model building.
One can adopt a similar sequential approach for reconstruction with an up-stream pre-processing step, like motion estimation.
Alternatively, just as with post-processing, one can integrate the motion estimation and reconstruction steps, i.e., perform motion estimation jointly with image reconstruction.

\emph{The approach considered in this paper is to jointly perform the following three tasks: motion estimation, tomographic reconstruction, and model building.} 
Such a joint approach may appear more complex than a sequential one, but it also opens up for using more efficient regularization methods.
This is needed as tomographic reconstruction of an object that undergoes an unknown motion is a severely ill-posed inverse problem.
A key advantage that comes with a joint formulation that also includes model building is that the latter can act as a form of regularization for the motion estimation and reconstruction. 
However, this benefit depends on adopting a carefully chosen representation.
In our framework, the object is modeled using a Gaussian mixture model (GMM), where it is expressed as a linear combination of Gaussian components. 
This is a natural model in many uses cases, e.g., for representing proteins where atoms are represented by Gaussians \cite{TEMsim2011} or in a multi-scale setting where Gaussians represent residues \cite{Bafna:2025aa}.
GMM have therefore found widespread use in the above mentioned cryo-EM single-particle analysis \cite{Chen2023GMMCryoEM, qu2025gem, chen2025cryosplat} and electron tomography \cite{Batenburg2011GMMET, Zhang2026GaussianSTEM}.
Another important application area is in computer vision and robotics, where GMM-based representations play a central role in many successful deep learning approaches to 3D scene reconstruction \cite{Halacheva2025GaussianVLM, Sun2024EmbodiedGS, Kong2024Survey3DGSRobotics}. A notable example is the usage of neural radiance fields (NeRFs), which recover 3D scene representations in the form of GMM from 2D images captured from multiple viewpoints \cite{Wu2023NeuralGaussianFields, Kerbl2023GaussianSplatting, Xu2023SurfelsNeRF, Gao2025NeRFReview, Bouaziz2025GaussianNeuralFields,Yao2026NeRF}.
On a final note, we also mention that the joint approach taken in this paper is part of an active research area within inverse problems on developing theory and algorithms methods for jointly solving an ill-posed inverse problem (reconstruction) with a downstream post-processing task \cite{Louis:2011aa,Oktem:Adler:2022aa}.
Similarly, it also aligns with the current drive towards multimodal learning, which is a type of deep learning that integrates and jointly processes multiple different data types (modalities), like text, audio, images, or video \cite{Liang:2024aa,Yin:2024aa,Yuan:2025aa}.

\subsection{Subsurface characterization}\label{sec:SubSurfChar}
One of the main tasks in the reservoir characterization process is to determine petrophysical properties of the excavated rocks (porosity, saturation, permeability) early in the process. This enables one to characterize the reserves as well as their productivity and deliverability. Although logs generally provide a good estimate of porosity and saturation along the well, permeability is harder to assess especially in carbonates. Drill cuttings can provide lithological and other additional information on the petrophysical properties of the reservoir \cite{Lenormand2007Cuttings}.

Lithology interpretation refers to the process to identify and interpret lithology of the geological layers from surface and subsurface drilling measurements.
This plays a foundational role in subsurface exploration and drilling operations, whether for geothermal energy or conventional oil and gas resources. It involves the identification and characterization of rock types and their properties, which directly influence drilling strategy, reservoir evaluation, and resource extraction efficiency. Accurate lithological interpretation enables engineers and geoscientists to optimize well placement, anticipate drilling hazards, and assess the viability of subsurface formations.

Lithology interpretation is particularly critical in geothermal drilling for locating heat-bearing formations and understanding the thermal and hydraulic behavior of the subsurface. It helps to identify permeable zones, estimate temperature gradients, and guide well design to maximize energy recovery. In conventional oil and gas drilling, lithology interpretation informs reservoir characterization, petrophysical analysis, and structural mapping. It supports the identification of productive zones, seal integrity, and hydrocarbon potential, while also aiding in the mitigation of drilling risks such as overpressured formations or unstable shales \cite{lai2023toward, lithology_rock_types, sun2025enhanced}.

One of the most widely used methods for lithology identification is visual analysis of drilled cuttings. Drilled cuttings are often angular, fractured, and irregular due to the mechanical action of drilling. Geologists characterize them through a combination of visual inspection, physical testing, and analytical techniques. Cuttings examined under the microscopes are characterized by their grain size and shape, color and texture, mineral composition, cementation and porosity. Roundness, sphericity, aspect ratio, convexity, and solidity are among the shape descriptors.

While cost-effective and accessible, the above approach is labor-intensive due to the frequent manual collection and preparation of cuttings, fragmented nature and collected volume of cuttings, potential contamination, and the need for expert interpretation. Geologists must visually inspect samples, often under microscopes, and integrate findings with log data and regional geological models. Despite advancements in automated mineralogy and machine learning, manual interpretation remains essential, especially in complex geological settings \cite{yamada2024liobia,morton1993development,caja2019image}.

\subsection{Tomographic imaging in subsurface characterization}\label{sec:SubSurfCharCT}
Motivated by reducing the aforementioned labor intensive effort in collection, preparation, and interpretation process, we conceptualize a system for online tomographic imaging of the cuttings as they are being discharged into a collection pit. 

CT imaging has been successfully used to get structural information of materials in a non-destructive fashion.
Its usage to image micro-cores is almost a standard procedure in the energy industry and is referred to as ``digital rock'' 
\cite{Rassenfoss2011DigitalRocks,Madonna2012DRP,Andra2013DRP1,Andra2013DRP2,Berg2017IndustrialDRP,Sadeghnejad2021DRPReview}.
In digital rock, microscopic imaging  obtained from CT scans are combined with numerical simulations to analyze the physical properties of rocks and helps to infer about petrophysical properties of reservoir. This is limited to small numbers of core samples. If upscaled to the whole well with high throughput, it could help determination of geomechanical properties of the subsurface  \cite{Cong2023ThinInterbedded} that could aid well construction and complement well integrity measurements. A similar approach was also investigated for cement-based materials \cite{Chung2019MicroCTCement}.

Incident energy dependency of the attenuation of the object can be used as a spectroscopic method \cite{Sanchez2017EDXASTomo}. When combined with the structural information, this could enable mineral characterization \cite{Wang2020MicroCTGeomet}. X-ray attenuation 
depends on the density of the material and its atomic composition, which vary significantly between different rock types. For example, carbonates (e.g., limestone, dolomite) have moderate attenuation due to calcium and magnesium content, shales have higher attenuation if rich in iron or organic matter. Sandstones have lower attenuation unless cemented with heavy minerals, and igneous rocks have high attenuation due to dense minerals like feldspar, pyroxene, and olivine 
\cite{Rahmani2016MultiscaleShaleCuttings}.

\subsection{Challenges}\label{sec:SubSurfCharChall}
There are hardware and software challenges associated with developing a tomographic imaging system that can allude to the structural and attenuation properties of cuttings. 

One is to have a cost effective acquisition system, e.g., by  considering a gantry with fixed source and detector elements.
Performing tomographic imaging in this setting relies on letting the objects being imaged to rotate as they move through the gantry.
Another is to design an associated reconstruction algorithm that is suited to deal with the high throughput of the system while delivering the highest image quality with least computational load for real-time applications.

Tomographic imaging of objects in motion has been a subject of investigation with fixed acquisition geometries and multiple sources \cite{Mudde2008TimeResolvingCT,Graas2024TimeResolvedFluidizedBeds}.
The inherent ill-posed nature of tomography problems makes it impossible to completely recover static objects with a fixed single-source acquisition geometry. However, 
we assume that discharged objects undergo parabolic and rotational motion while they traverse the field of view of the imaging system. This assumption transforms the limited-angle tomography into a finite angle tomography with the additional complexity of recovering dynamic properties of the object in conjunction to its reconstruction. 
In this study we will focus on the dynamic tomographic reconstruction problem for a given detector geometry and leave the optimal experimental design problem
\cite{fathi2025bi}
of using a minimal number of detectors for a future study. 

The associated software challenge is to investigate whether it is possible to image discharged cuttings as they go under parabolic motion. We consider approximation cuttings as GMM. A motivation for this representation is that it has the capacity to approximate general objects up to an arbitrary degree of accuracy \cite{zickert2022gaussian}. Another motivation is that the ray transform of this representation is a lower dimensional GMM which can be analytically computed. For the sake of simplicity we approximate each cutting by a single Gaussian. Characterizing the shape of a drilled cutting as a Gaussian is not typically done in geology, but it provides convenience from a mathematical modeling or image analysis perspective. As a consequence, discharge of multiple cuttings constitutes a dynamic GMM. This is a reasonable first step in our feasibility study before approximation of individual cuttings with GMM. In this form, describing  outline, cross-section, curvature or thickness by fitting a 3D Gaussian blob can approximate the volume or morphology of a cutting particle.

\section{The mathematical setting}\label{Sect:Setting}

    


Consider performing tomography on an object that undergoes motion. 
The starting point for the mathematical formalization is to represent the object with a real valued function $f \colon \R^d \times [0,t_{\max}] \to \R$. Data is similarly given as (noisy) samples of a function $g \colon \Mlines \times [0,t_{\max}] \to \R$ where $\Mlines$ is some manifold of lines in $\R^d$ and 
\begin{align}
\label{eq:ray_sinogram}
  g(\ell,t) := \RayTran\bigl(f(\,\cdot\,,t)\bigr)(\ell) := \int_{\vecOp{x}\in\ell} f(\vecOp{x},t)\, d\vecOp{x} \quad\text{for $\ell \in \Mlines$,}
\end{align}
with $\RayTran$ denoting the ray transform ($d\vecOp{x}$ is the 1-dimensional Lebesgue measure on the line $\ell$).
\begin{definition}[Spatiotemporal imaging]
\label{prob:InvProb1}
Recover $t \mapsto f(\,\cdot\,,t)$ on $\R^d$ from time-series $t \mapsto g(\,\cdot\,,t)$ on $\Mlines$ where $g \colon \Mlines \times [0,t_{\max}] \to \R$ is given as in \eqref{eq:ray_sinogram}.
\end{definition}

The data collected by the experimental source/detector setup corresponds to sampling on the manifold $\Mlines$ of lines in $\R^d$.
Our experimental setup has a \emph{fixed} point-like source (source position) with fixed detectors that are represented as a planar rectangular region contained in a $(d-1)$-dimensional plane (detector plane). We assume the source is centered w.r.t.\@ the detector, i.e., projecting the source position onto the detector plane coincides with the midpoint of the rectangular region that makes up the detector.

The above inverse problem is clearly under-determined as it involves recovering a real-valued function on $\R^d$ from data given as a real-valued function on $\Mlines$, which is the family of lines intersecting a $(d-1)$-dimensional rectangular region (detector) and that go through a point (source position).
We therefore need to make use of further assumptions.

In the application we have in mind, we can assume that the object consists of $N$ rigid isolated particles that move independently of each other. 
Hence, the temporal variation of the object corresponds to $N$ independent motion trajectories, one for each particle. 
Since each particle undergoes a time dependent rigid body motion, we can re-interpret this as viewing each particle from directions that vary with time, i.e., as tomographic imaging with an unknown acquisition geometry.

To formalize the above, the function $f \colon \R^d \to \R$ representing the object can be written as
\begin{align}
\label{eq:f}
f(\vecOp{x},t) = \sum_{n=1}^N f_n(\vecOp{x},t) \quad\text{for $(\vecOp{x},t) \in \R^d\times [0, t_{\max}]$}    
\end{align}
where $f_n \colon \R^d \to \R$ represents the $n$-th particle.
The assumption that each particle undergoes a time-dependent rigid body motion is then encoded as
\begin{align}
      f_n(\vecOp{x},t) := \bigl( \matOp{M}_n(t).\rho_n \bigr)(\vecOp{x}),
  \label{eq:f_n}
\end{align}  
where $\rho_n \colon \R^d \to \R$ represents the particle structure and the particle-specific rigid body motion is represented by the action $(\matOp{M},\rho) \mapsto \matOp{M}.\rho$ of $\matOp{M} \in \SE(d)$ on the set of $\R$-valued functions on $\R^d$.
The latter is given as 
\begin{equation} 
(\matOp{M}.\rho)(x) := \rho(\matOp{M}.x) =
   \rho\bigl( \matOp{R}^{-1}(\vecOp{x}-\vecOp{\tau}) \bigr)
   \quad\text{for $\matOp{M}=(\matOp{R},\vecOp{\tau}) \in \SE(d) = \SO(d) \ltimes \R^d$.}
\end{equation}
Inserting \eqref{eq:f_n} into \eqref{eq:f} yields the following spatiotemporal model for the object:
\begin{align}
\label{eq:f_generative_model}
f(\vecOp{x},t) 
= \sum_{n=1}^N \bigr(\matOp{M}_n(t).\rho_n\bigr)(\vecOp{x}) 
= \sum_{n=1}^N \rho_n\Bigl(\matOp{R}_n(t)^{-1}\bigl(\vecOp{x}-\vecOp{\tau}_n(t)\bigr)\Bigr),
\end{align}
where each curve $t \mapsto \matOp{M}_n(t) = \bigl(\matOp{R}_n(t),\vecOp{\tau}_n(t)\bigr) \in \SE(d)$ models the motion of the $n$-th particle.
Inserting the motion model \eqref{eq:f_generative_model} into \eqref{eq:ray_sinogram} yields the following model for the data:
\begin{equation}
g(\ell,t) 
 := \RayTran\biggl(\sum_{n=1}^N \matOp{M}_n(t).\rho_n\biggr)(\ell) 
 = \sum_{n=1}^N \RayTran\Bigl(\matOp{M}_n(t).\rho_n\Bigr)(\ell),  
\label{eq:data_model_motion}
\end{equation}
where the last equality follows from the linearity of the ray transform. 

As a generative model for the morphology component of the particles,
we assume they are all dilations of a \emph{single} template $\rho_0$, i.e. morphology of particles are in the $\GL(d)$ orbit of $\rho_0$:
\begin{align}
    \GL(d) \rho_0 := \bigl\{\matOp{U}_n^{-1}.\rho_0\, |\, \matOp{U}_n\in \GL(d) \bigr\}
    = \bigl\{\vecOp{x} \mapsto \rho_0(\matOp{U}_n\vecOp{x} )\, |\, \matOp{U}_n\in \GL(d) \bigr\},
\end{align}
where $\GL(d)$ is the general linear group on $\R^d$. 
\begin{remark}\label{rmrk:U_SNND}
    Note that $\matOp{U}^{-1}.\rho_0 \colon \R^d \to \R$ depends only on the symmetric non-negative definite matrix $\matOp{U}^\top \matOp{U}$, so we can without loss of generality assume that $\matOp{U}\in\R^{d\times d}$ is upper triangular  with non-negative diagonal entries, $\matOp{U}\in \R^{d_\matOp{U}} \subset \R^{d\times d}$, where $d_\matOp{U} = d\ (d+1) /2$
\end{remark}
For example, if morphology component is given by an isotropic Gaussian $\rho_0(\vecOp{x}) = e^{-\vecOp{x}^{\!\top}\! \vecOp{x}}$, then
\begin{equation}\label{eq:ParticleModel}
  \rho_n(\vecOp{x}) := \alpha_n (\matOp{U}_n^{-1}.\rho_0)(\vecOp{x})
  =
  \alpha_n \rho_0(\matOp{U}_n\vecOp{x}).
\end{equation}

Next, combination of \eqref{eq:f}, \eqref{eq:f_n}, and \eqref{eq:ParticleModel} constitutes a Gaussian mixture model (GMM) model for $\vecOp{x} \mapsto f(\vecOp{x},t)$. A motivation for this representation is that it has the capacity to approximate general objects up to an arbitrary degree of accuracy \cite{zickert2022gaussian}. Another motivation is that this representation is mapped to a lower dimensional GMM by the ray transform that can be analytically computed as will be shown in Proposition~\ref{prop: ray of stationary gaussian}.

Substituting \eqref{eq:ParticleModel} into \eqref{eq:f_generative_model} and \eqref{eq:data_model_motion} yields the following generative model for the object and data, respectively,
\begin{align}
\label{eq:f_generative_model_GMM}
f(\vecOp{x},t) = \sum_{n=1}^N \alpha_n \bigl(\matOp{M}_n(t).\matOp{U}_n^{-1}.\rho_0 \bigr)(\vecOp{x}) 
\quad\text{for $\vecOp{x} \in \R^d$ and $t \in [0, t_{\max}]$.}
\end{align}
and
\begin{equation}
g(\ell,t) 
:= \RayTran\biggl(\sum_{n=1}^N \alpha_n \matOp{M}_n(t).\matOp{U}_n^{-1}.\rho_0\biggr)(\ell) 
= \sum_{n=1}^N \alpha_n \RayTran\Bigl(\matOp{M}_n(t).\matOp{U}_n^{-1}.\rho_0\Bigr)(\ell)
  \quad\text{for $\ell \in \Mlines$ and $t \in [0, t_{\max}]$.}
\label{eq:data_model_GMM}
\end{equation}

\emph{The inverse problem is now to determine $N$ dilations $\matOp{U}_n \in \R^{d_\matOp{U}}$ and $N$ curves $t\mapsto \matOp{M}_n(t)$ from a single time series $t\mapsto g(\,\cdot\,,t)$ of ray transform data on $\Mlines$.}
Each $\matOp{U}_n$ generates the structures for the particles through \eqref{eq:ParticleModel} and $t\mapsto \matOp{M}_n(t)$ is a nuisance parameter representing the particle specific motions. We expect that this inverse problem is better posed than the original one stated after \eqref{eq:ray_sinogram}.

We next introduce computationally feasible parameterizations of the motion model (Section~\ref{sec:MotionModel}) and the ray transform (Section~\ref{sec:ParamFwdModel}).

\section{Parameterizing particle motions}\label{sec:MotionModel} 


An element $\matOp{M}\in \SE(d)$ can be parameterized as a combination of translation and rotation which in matrix form is expressible as
\begin{align}
    \matOp{M} = \begin{bmatrix} 
        \matOp{R}_{\vecOp{\Omega}} & \vecOp{\tau} \\[0.5em]
        \vecOp{0}^{\top} & 1
    \end{bmatrix}.
\end{align}
In the above, $\vecOp{\tau}\in \R^d$ is the  translation, $\vecOp{0}\in \R^d$ is the zero vector, and $\matOp{R}_{\vecOp{\Omega}} \in \SO(d)$ is parameterized by $\vecOp{\Omega} \in \R^{d_\matOp{R}}$ with $d_\matOp{R}:=d(d-1)/2$. 
One way to parameterize $\matOp{R}_{\vecOp{\Omega}}$ is as the product
\begin{align}
    \matOp{R}_{\vecOp{\Omega}} := \prod_{k=1}^{d_\matOp{R}} \matOp{R}_k(\omega_k), \quad \text{for $\omega_k\in\R$}
\end{align}
where
\[
\matOp{R}_k(\omega_k) :=\!\!\!\!\!\!\!\!\!\!
\begin{bNiceMatrix}[
  first-row,
  code-for-first-row = \scriptstyle,
  last-row,  
  code-for-last-row = \scriptstyle,
  first-col, 
  code-for-first-col = \scriptstyle,  
  last-col, 
  code-for-last-col = \scriptstyle,
  margin,
  cell-space-limits=3pt]
  &  d_1-1 & &  d_2-d_1-1 & &  &
  \\
  d_1-1 & \Block[borders={bottom,top,right,left,tikz=dotted}]{1-1}{} \matOp{I}_1 & & & & &
  \\
  &  &  \cos\omega_k & & -\sin\omega_k & &
  \\
  &  &  & \Block[borders={bottom,top,right,left,tikz=dotted}]{1-1}{} \matOp{I}_2 & & & d_2-d_1-1
  \\
  & & \sin\omega_k &  & \cos\omega_k & &
  \\  
  &  &  &  &  & \Block[borders={bottom,top,right,left,tikz=dotted}]{1-1}{} \matOp{I}_3 & d_R-d_2
  \\
  &  &  &  &  &  d_R-d_2&
\end{bNiceMatrix}
\]
for $k = (d_1-1)(d-d_1) + (d_2-d_1)$.
Here $d_1<d_2\le d$ is the row and column indices for specific elements in $\matOp{R}_k(\omega_k)$ and 
\begin{itemize}
\item $\matOp{I}_1$ is the $(d_1-1)\times(d_1-1)$ identity matrix,
\item $\matOp{I}_2$ is the $(d_2-d_1-1)\times (d_2-d_1-1)$ identity matrix,
\item $\matOp{I}_3$ is the $\left(d_\matOp{R}-d_2\right)\times \left(d_\matOp{R}-d_2\right)$ identity matrix,
\end{itemize}
Besides the elements shown, all other elements of $\matOp{R}_k(\omega_k)$ are zero. 

One can now model the curve $\matOp{M}_n(t)\in \SE(d)$ by a time-dependent translation $\vecOp{\tau}_n(t) \in \R^d$ and rotation $\matOp{R}_{\vecOp{\Omega}_n(t)} \in \SO(d)$ given by
\begin{align}
  \vecOp{\tau}_n(t)&= \vecOp{\mu}_n + t\, \vecOp{v}_n + \frac{t^2}{2} \vecOp{a}_n \quad\text{for some $\vecOp{v}_n, \vecOp{a}_n \in \R^d$}
  \label{eq:TranslationModel}
  \\
  \vecOp{\Omega}_n(t) &:= t \vecOp{\Theta}_n
  , \quad\text{for $\vecOp{\Theta}_n \in \R^{d_\matOp{R}}$}
  \label{eq:RotationModel}   
\end{align}
where
\[
  \matOp{R}_{\vecOp{\Omega}_n(t)} := \prod_{k=1}^{d_\matOp{R}}\begin{bmatrix}
 \matOp{I}_1 &     &  &  &  \\
  & \cos(t\theta_{n,k}) &  & -\sin(t\theta_{n,k}) &  \\
  &  & \matOp{I}_2 &  &  \\
  & \sin(t\theta_{n,k}) &  & \cos(t\theta_{n,k}) & \\
  &  &  &  & \matOp{I}_3
  \end{bmatrix}
\]
and with $\theta_{n,k}$ denoting the $k$-th component of $\vecOp{\Theta}_n$.

\section{Parameterizing the domain of the forward operator}
\label{sec:ParamDomFwdOp}
The parameterization in Section~\ref{sec:MotionModel} allows us to rewrite the data model in \eqref{eq:data_model_GMM}. 
Let us employ the following notation:
\begin{itemize}
\item $(\alpha,\matOp{U}) \in \R \times \R^{d_\matOp{U}}$ encodes the amplitude/attenuation and shape/morphology of the particle.
\item $\vecOp{\Theta} \in \R^{d_\matOp{R}}$ encodes the rotational motion $\matOp{R}_{t\vecOp{\Theta}}\in \R^{d_\matOp{R}}$ of the particle.
\item $\vecOp{\eta}:=(\vecOp{\mu},\vecOp{v}, \vecOp{a})\in \R^d \times \R^d \times \R^d$ encodes the projectile motion curve $\vecOp{C}[\vecOp{\eta}] \colon [0,t_{\max}] \to \R^d$ of the particle defined as
\[
\vecOp{C}[\vecOp{\eta}](t):= \vecOp{\mu} + t \vecOp{v} + \frac{t^2}{2}\vecOp{a}.
\]
Here, $\vecOp{\Theta}$ is the angular velocities, $\vecOp{\mu}$ is the spatial location at time $t=0$, $\vecOp{v}$ is the initial directional velocity, and $\vecOp{a}$ is the initial acceleration.
\end{itemize}
Let $\Aff(d) = \GL(d)\ltimes \R^d$ denote the affine group, and 
$\matOp{A}[\matOp{U},\vecOp{\Theta},\vecOp{\eta}] (t) = (\matOp{R}_{t \vecOp{\Theta}} \matOp{U}^{-1}, \vecOp{C}[\vecOp{\eta}](t))\in \Aff(d)$
capture the morphology and motion of a particle. 
Denoting the $n$-th particle by
\begin{align}\label{eq:NthParticle}
\matOp{M}_n(t).\rho_n  = \matOp{A}[\matOp{I},\vecOp{\Theta}_n,\vecOp{\eta}_n](t). \rho_n = \alpha_n \matOp{A}[\matOp{U}_n,\vecOp{\Theta}_n,\vecOp{\eta}_n](t). \rho_0,
\end{align}
we see that the morphological components of the particles undergoing motion are in the affine group orbit of $\rho_0$: 
\begin{align}
\Aff(d) \rho_0 := \bigl\{\matOp{A}.\rho_0\, |\, \matOp{A}\in \Aff(d) \bigr\} = \bigl\{\vecOp{x}\mapsto \rho_0\bigl(\matOp{U}^{-1}(\vecOp{x}-\vecOp{\mu})\bigr)\, |\, \matOp{A}=(\matOp{U},\vecOp{\mu})\in \Aff(d)\bigr\}.
\end{align}

Substituting \eqref{eq:NthParticle} into \eqref{eq:data_model_GMM} and using the above notation allows us to express the inverse problem stated after \eqref{eq:data_model_GMM} as the task to recover 
\[ \matOp{\Gamma} :=(\vecOp{\gamma}_1,\ldots,\vecOp{\gamma}_N)
\quad\text{with}\quad 
\vecOp{\gamma}_n:=\bigl((\alpha_n,\matOp{U}_n),\vecOp{\Theta}_n,\vecOp{\eta}_n\bigr)\in (\R \times \R^{d_\matOp{U}}) \times \R^{d_\matOp{R}} \times (\R^d  \times \R^d \times \R^d)\]
from data 
\begin{equation}\label{eq:data_model2}
  g(\ell,t) = F[\matOp{\Gamma}](\ell,t)
  \quad\text{for $(\ell,t) \in \Mlines \times [0,t_{\max}]$} 
\end{equation}
where $F[\matOp{\Gamma}] \colon \Mlines \times [0,t_{\max}] \to \R$ (forward operator) is given as   
\begin{align}
\label{eq:forward_operator}
    F[\matOp{\Gamma}](\ell,t) := 
    \sum_{n=1}^N \alpha_n \RayTran\bigl(\matOp{A}[\matOp{U}_n,\vecOp{\Theta}_n, \vecOp{\eta}_n](t). \rho_0\bigr)(\ell).
\end{align}

To summarize, the original inverse problem stated right after \eqref{eq:ray_sinogram} was to recover the time-dependent structure of the object from a time series of single projections, which is clearly severely ill-posed. To address the latter, we assume the object consists of particles that move independently from each other as expressed in \eqref{eq:f_n}.
The corresponding inverse problem, which is stated after \eqref{eq:data_model_GMM}, is still under determined as it involves recovering particle structures (which are elements in an infinite dimensional space) and their motions (which is encoded as an element in an infinite dimensional space).
For this, we use a generative model for the particle structures \eqref{eq:ParticleModel} and a parabolic motion model \eqref{eq:TranslationModel} with rotations \eqref{eq:RotationModel}.
All of these are specified by finitely many parameters, so the corresponding inverse problem considering \eqref{eq:data_model2} is not necessarily under-determined. A drawback is that the corresponding forward operator, which was linear in \eqref{eq:ray_sinogram}, is now non-linear.

The above dealt with introducing coordinates on the domain of the forward operator, i.e., to parameterize time-dependent, $\R$-valued functions on $\R^d$.
The final step necessary for implementation is to introduce coordinates on the range of the forward operator, which is time dependent $\R$-valued functions on $\R^d $ (time dependent sinograms).
This is done in Section~\ref{sec:ParamFwdModel}.




\section{Parameterizing the range of the forward operator}\label{sec:ParamFwdModel}
The starting point is to introduce coordinates on the  the manifold of lines $\Mlines$.
A line $\ell \in \Mlines$ can be parameterized by a source-detector pair of points $(\vecOp{s},\vecOp{r}) \in \R^d \times \R^d$ as $\xi \mapsto \vecOp{s} + (\widehat{\vecOp{r} - \vecOp{s}})\xi$. 
Then the ray transform in \eqref{eq:ray_sinogram} can be expressed in these coordinates as follows.
\begin{definition}[Ray transform]\label{def: ray transform}
    The ray transform of a function $\rho \colon \R^d \to \R$ evaluated at a line given by the source--detector pair $(\vecOp{s},\vecOp{r})\in\R^d\times\R^d$ is the line integral of $\rho$ over that line: 
    \begin{align}
        \RayTran\bigl(f(\,\cdot\,,t)\bigr)[\vecOp{s},\vecOp{r}] := \int_\R f\big(\vecOp{s} + (\widehat{\vecOp{r} - \vecOp{s}})\xi,t\big)d\xi,\label{eq: ray transform}
    \end{align}
    where $\widehat{\vecOp{x}} := \vecOp{x}/\|\vecOp{x}\|$ for any $\vecOp{x}\in\R^d\backslash\{0\}$, and $\|\,\cdot\,\|$ is the $d$-dimensional Euclidean norm.
\end{definition}

\begin{proposition}[Ray transform of an anisotropic Gaussian]\label{prop: ray of stationary gaussian}
    Let $\matOp{A}=(\matOp{U}^{-1},\vecOp{\mu})\in\Aff(d)$. 
    Then the ray transform of the anisotropic Gaussian function $\matOp{A}.\rho_0$ is
    \be
        \RayTran(\matOp{A}.\rho_0)[\vecOp{s},\vecOp{r}] = \dfrac{\sqrt{\pi}}{\bigl\|{\matOp{U}(\widehat{\vecOp{r} - \vecOp{s}})}\bigr\|}\exp\biggl(\dfrac{\bigl\langle \matOp{U}(\vecOp{r} - \vecOp{s}), \matOp{U}(\vecOp{s} - \vecOp{\mu})\bigr\rangle^2}{\bigl\|\matOp{U}(\vecOp{r} - \vecOp{s})\bigr\|^2} - \bigl\|\matOp{U}(\vecOp{s} - \vecOp{\mu})\bigr\|^2\biggr)
        \label{eq: ray of Gaussian}
    \ee
    for $(\vecOp{s},\vecOp{r})\in\R^d\times\R^d$, where $(\matOp{A}.\rho)(x) := \rho(\matOp{A}.x) = \rho(\matOp{U}(\vecOp{x} - \vecOp{\mu}))$.
    
\end{proposition}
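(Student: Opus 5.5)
The plan is to substitute the line parameterization of Definition~\ref{def: ray transform} directly into $\matOp{A}.\rho_0$ and reduce the line integral to a one-dimensional Gaussian integral by completing the square. Write $\vecOp{w} := \widehat{\vecOp{r}-\vecOp{s}}$, $\vecOp{b} := \matOp{U}(\vecOp{s}-\vecOp{\mu})$ and $\vecOp{c} := \matOp{U}\vecOp{w}$. With $\rho_0(\vecOp{x}) = e^{-\vecOp{x}^\top\vecOp{x}}$, the integrand becomes
\begin{equation*}
(\matOp{A}.\rho_0)(\vecOp{s}+\vecOp{w}\xi) = \exp\bigl(-\|\vecOp{b} + \xi\vecOp{c}\|^2\bigr) = \exp\bigl(-\|\vecOp{c}\|^2\xi^2 - 2\langle \vecOp{b},\vecOp{c}\rangle\xi - \|\vecOp{b}\|^2\bigr),
\end{equation*}
which is a quadratic in $\xi$. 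Here I assume $\matOp{U}$ is invertible, as an element of $\GL(d)$, so that $\vecOp{c}\neq 0$, and that $\vecOp{r}\neq\vecOp{s}$.

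Next, complete the square:
\begin{equation*}
-\|\vecOp{c}\|^2\bigl(\xi + \langle\vecOp{b},\vecOp{c}\rangle/\|\vecOp{c}\|^2\bigr)^2 + \langle\vecOp{b},\vecOp{c}\rangle^2/\|\vecOp{c}\|^2 - \|\vecOp{b}\|^2 .
\end{equation*}
Shifting the integration variable and applying $\int_\R e^{-a\xi^2}\,d\xi = \sqrt{\pi/a}$ with $a = \|\vecOp{c}\|^2$ produces the prefactor $\sqrt{\pi}/\|\matOp{U}\widehat{\vecOp{r}-\vecOp{s}}\|$. What remains is the exponential factor $\exp\bigl(\langle\vecOp{b},\vecOp{c}\rangle^2/\|\vecOp{c}\|^2 - \|\vecOp{b}\|^2\bigr)$.

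Finally, observe that $\langle\vecOp{b},\vecOp{c}\rangle^2/\|\vecOp{c}\|^2$ is invariant under positive rescaling of $\vecOp{c}$. We may therefore replace $\matOp{U}\widehat{\vecOp{r}-\vecOp{s}}$ by $\matOp{U}(\vecOp{r}-\vecOp{s})$ in the exponent, which gives exactly \eqref{eq: ray of Gaussian}. Note that the prefactor must keep the normalized direction, since the measure $d\xi$ is arc length.

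The only delicate point is bookkeeping. One must be consistent with the convention $\matOp{A}=(\matOp{U}^{-1},\vecOp{\mu})$ acting as $\rho(\matOp{U}(\vecOp{x}-\vecOp{\mu}))$, and one must keep track of which occurrence of the direction is normalized. Convergence of the integral is immediate because $\|\vecOp{c}\|>0$.
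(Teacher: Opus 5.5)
Your proof is correct and follows the paper's argument essentially step for step: substitute the arc-length parameterization, expand the quadratic in $\xi$, evaluate the one-dimensional Gaussian integral (the paper quotes $\int_\R e^{-a\xi^2+b\xi}\,d\xi=\sqrt{\pi/a}\,e^{b^2/4a}$ where you complete the square), and then use scale invariance to replace $\widehat{\vecOp{r}-\vecOp{s}}$ by $\vecOp{r}-\vecOp{s}$ in the exponent. You state that last step as invariance of $\langle\vecOp{b},\vecOp{c}\rangle^2/\|\vecOp{c}\|^2$ under rescaling of $\vecOp{c}$, which is more accurate than the paper's displayed identity: the paper omits the square on the inner product, and the unsquared ratio is not scale invariant.
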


\begin{proof}
    Write $\vecOp{\delta} = \vecOp{r} - \vecOp{s}$ for brevity. Then
    \begin{align*}
        \RayTran( \matOp{A}.\rho_0)[\vecOp{s},\vecOp{r}] 
        &= \int_\R (\matOp{A}.\rho_0)\big(\vecOp{s} + \xi\widehat{\vecOp{\delta}}\big)d\xi
        \\
        &= \int_\R\exp\Bigl(-\bigl\|\matOp{U}(\vecOp{s} + \xi\widehat{\vecOp{\delta}} - \vecOp{\mu})\bigr\|^2\Bigr)d\xi
        \\
        &= \int_\R\exp\Bigl(-\bigl\|\xi\matOp{U}\widehat{\vecOp{\delta}} + \matOp{U}(\vecOp{s} - \vecOp{\mu})\bigr\|^2\Bigr)d\xi.
    \end{align*}
    Expanding the squared norm:
    \begin{align*}
    \RayTran( \matOp{A}.\rho_0)[\vecOp{s},\vecOp{r}]
        &= \int_\R\exp\Bigl(-\|\matOp{U}\widehat{\vecOp{\delta}}\|^2\xi^2 - 2\langle\matOp{U}\widehat{\vecOp{\delta}}, \matOp{U}(\vecOp{s} - \vecOp{\mu})\rangle\xi - \|\matOp{U}(\vecOp{s} - \vecOp{\mu})\|^2 \Bigr)d\xi
        \\
        &= \exp\Bigl(- \|\matOp{U}(\vecOp{s} - \vecOp{\mu})\|^2\Bigr)\int_\mathbb{R}\exp\Bigl(-\|\matOp{U}\widehat{\vecOp{\delta}}\|^2\xi^2 - 2\langle\matOp{U}\widehat{\vecOp{\delta}}, \matOp{U}(\vecOp{s} - \vecOp{\mu})\rangle\xi\Bigr)d\xi.
    \end{align*}
    Applying the identity $\int_\R e^{-at^2 + bt}dt = \sqrt{\pi / a}\, e^{b^2/4a}$ (valid for $a > 0$) with $a = \|\matOp{U}\widehat{\vecOp{\delta}}\|^2$ and $b = -2\langle\matOp{U}\widehat{\vecOp{\delta}}, \matOp{U}(\vecOp{s} - \vecOp{\mu})\rangle$:
    \begin{align*}
    \RayTran( \matOp{A}.\rho_0)[\vecOp{s},\vecOp{r}]
        = \exp\Bigl(-\|\matOp{U}(\vecOp{s} - \vecOp{\mu})\|^2\Bigr)\cdot\dfrac{\sqrt{\pi}}{\|\matOp{U}\widehat{\vecOp{\delta}}\|}\exp\biggl(\dfrac{\bigl\langle\matOp{U}\widehat{\vecOp{\delta}}, \matOp{U}(\vecOp{s} - \vecOp{\mu})\bigr\rangle^2}{\|\matOp{U}\widehat{\vecOp{\delta}}\|^2}\biggr).
    \end{align*}
    Since $\langle \matOp{U}\widehat{\vecOp{\delta}}, \matOp{U}(\vecOp{s} - \vecOp{\mu}) \rangle / \|\matOp{U}\widehat{\delta}\|^2 = \langle \matOp{U}\vecOp{\delta}, \matOp{U}(\vecOp{s} - \vecOp{\mu}) \rangle / \|\matOp{U}\vecOp{\delta}\|^2$, substituting $\vecOp{\delta} = \vecOp{r} - \vecOp{s}$ back gives \eqref{eq: ray of Gaussian}.    
\end{proof}

The identity in \eqref{eq: ray of Gaussian} allows us to express the forward operator in \eqref{eq:forward_operator} when the lines in the sinogram are parameterized by source--detector pairs:
\begin{align}
\label{eq:param_fwd_opt}
F[\matOp{\Gamma}](\vecOp{s},\vecOp{r}, t)
& := \sum_{n=1}^N \alpha_n \RayTran\Bigl(\matOp{A}[\matOp{U}_n,\vecOp{\Theta}_n, \vecOp{\eta}_n](t).\rho_0 \Bigr)[\vecOp{s},\vecOp{r}].
\end{align}

In an experimental setup, we only measure finitely many lines at finitely many time samples. A straightforward formalization of this setting is to consider finitely many triplets indexed by the multi-index $\vecOp{m}\in\mathbb{I}$, where $\mathbb{I} = \{1, \ldots, M_s\}\times\{1, \ldots, M_r\}\times\{1, \ldots, M_t\}$:
\begin{equation}\label{eq:MfldLines}
    \DynLines_0 = \{(\vecOp{s}_{m_s},\vecOp{r}_{m_r}, t_{m_t}) \}_{\vecOp{m}\in\mathbb{I}} \subset \DynLines := \Mlines \times [0, t_{\max}], 
\end{equation}
each encoding the line between a source--detector pair $(\vecOp{s}_m,\vecOp{r}_m)$ at some time point $t_m$.

Recall from Section~\ref{sec:ParamDomFwdOp} that the domain of $F$ consists of elements $\matOp{\Gamma} :=(\vecOp{\gamma}_1,\ldots,\vecOp{\gamma}_N)$ of the form
\begin{equation}\label{eq:SmallGamma}
\vecOp{\gamma}_n:=\bigl((\alpha_n,\matOp{U}_n),\vecOp{\Theta}_n,\vecOp{\eta}_n\bigr)\in (\R \times \R^{d_\matOp{U}}) \times \R^{d_\matOp{R}} \times (\R^d  \times \R^d \times \R^d) = \mathbb{D}.
\end{equation}
Each element in $\Mdomain$ specifies a particle morphology and dynamics. Note that, by Remark~\ref{rmrk:U_SNND} the dimension of $\matOp{U_n}$ is $d(d+1)/2$, so the dimension of $\Mdomain$ is $1+d(d+1)/2+d+d(d-1)/2+d+d = d^2+3d+1$. Consequently, the dimension of the domain is $(d_\matOp{U}+ d_\matOp{R}+ 3d + 1)\times N = (d^2+3d+1)\times N$, with $N$ denoting the number of particles that make up the object. On the other hand the dimension of the measured data is $(d+d+1) \times M = (2d + 1) \times M$. For fixed number of sources, $M_{\vecOp{s}}$, and detector elements $M_{\vecOp{r}}$, one has $M = M_{\vecOp{s}} \times M_{\vecOp{r}} \times M_t$, where $M_t$ is the number of time samples. As a result, $M_t$ should be chosen to at least satisfy 
\begin{align}\label{eq:Criteria}
    (2d+1) \times M \ge (d^2+3d+1)\times N \implies
    M_{\vecOp{s}} \times M_{\vecOp{r}} \times M_t \ge \frac{(d^2+3d+1)}{2d+1} \times N
\end{align}
Note however that the above inequality does not guarantee that the digitized ray transform in \eqref{eq: ray transform} restricted to the manifold \eqref{eq:MfldLines} is invertible.
To see this, consider a planar setting, i.e. $d=2$, with a single source position and a line detector of 512 detector elements (receivers). Then \eqref{eq:Criteria} becomes
\begin{align*}
    M_t \ge 11/5 \times N/512 > 0
\end{align*}
which for $N=1$ means that $M_t \ge 1$ which is reconstruction from a single view. However, for $M_t =1$ the inversion problem reduces to the standard tomography problem and in that setting, it is well known that an arbitrary object cannot be recovered from a single view.
It is thus clear that one needs additional requirements to ensure invertibility. In addition, further requirements will be needed to ensure the inversion is mildly ill-posed or even better, well-posed.

\section{The inverse problem}\label{sec:IP}
We now revisit the spatiotemporal inverse problem in Definition~\ref{prob:InvProb1}. 
Rephrasing it in the setting defined by the specific parameterizations introduced in Sections~\ref{sec:MotionModel}--\ref{sec:ParamDomFwdOp} gives us the following inverse problem:
\begin{definition}[The inverse problem]\label{prob:InvProb2}
    Recover the parameters governing the motion and morphology for $N$ distinct particles $\matOp{\Gamma} :=(\vecOp{\gamma}_1,\ldots,\vecOp{\gamma}_N)$ with $\vecOp{\gamma}_i \in \mathbb{D}$ from data $\{g[\vecOp{m}]\}_{\vecOp{m}\in\mathbb{I}}\subset\mathbb{R}$, where
    \bea
        g[\vecOp{m}]\approx F[\matOp{\Gamma}](\vecOp{s}_{m_s}, \vecOp{r}_{m_r}, t_{m_t})
    \eea
    for each triplet $(\vecOp{s}_{m_s}, \vecOp{r}_{m_r}, t_{m_t})$ indexed by the multi-index $\vecOp{m} = (m_s, m_r, m_t)\in\mathbb{I}$. The index set $\mathbb{I} = \{1, \ldots, M_s\}\times\{1, \ldots, M_r\}\times\{1, \ldots, M_t\}$ defines the finite sampling of the manifold $\mathbb{M}\times[0, t_\textnormal{max}]$, 
    and the forward operator $F$ models the observation process as defined in \eqref{eq:param_fwd_opt}.
\end{definition}
Least-squares type solutions of the inverse problem in Definition~\ref{prob:InvProb2} are obtained by solving
\begin{align}
    \widehat{\matOp{\Gamma}} \in \arg\min_{\matOp{\Gamma}}\, \matOp{L}\Bigl(\left\{g[\vecOp{m}]\right\}_{\vecOp{m}\in\mathbb{I}},\bigl\{F[\matOp{\Gamma}](\vecOp{s}_{m_s},\vecOp{r}_{m_r}, t_{m_t})\bigr\}_{\vecOp{m}\in\I} \Bigr) \label{eq:inverse_problem_Gamma}
\end{align}
where $\matOp{L} \colon \R^{|\I|} \times \R^{|\I|} \to \R$ is some suitably chosen loss function and $|\I| = M_s\times M_r\times M_t$. For example, choosing $\matOp{L}$ as a $p$-norm yields
\begin{align}
    \widehat{\matOp{\Gamma}} \in \arg\min_{\matOp{\Gamma}}\, \Bigl(\sum_{\vecOp{m}\in\I} \bigl| g[\vecOp{m}] - F[\matOp{\Gamma}](\vecOp{s}_{m_s},\vecOp{r}_{m_r}, t_{m_t})\bigr|^p\Bigr)^{1/p} \label{eq:inverse_problem_Gamma2}.
\end{align}

The spatiotemporal inverse problem in Definition~\ref{prob:InvProb1} is ill-posed in that general setting since inverting the ray transform is known to be unstable.
This means in particular that least-squares solutions are unsuitable, as such solution methods are unstable w.r.t.\@ perturbations/errors in data.
However, the specific variant of the inverse problem in Definition~\ref{prob:InvProb2} may come with better stability properties (the specific parameterization introduced in Sections~\ref{sec:MotionModel}--\ref{sec:ParamDomFwdOp} will here implicitly act as a regularization).
This is also why a least-squares solution, like the one in \eqref{eq:inverse_problem_Gamma2} for $p=2$, may work without any additional regularizer.

Besides stability there is the issue of uniqueness. The least-squares optimization problem is non-convex, meaning that it can have several optimal solutions. 
In addition, gradient-based schemes for solving it could get trapped in undesirable local minima, so initialization becomes important.
One approach for mitigating the risk of getting trapped in undesirable local minima is to split the updating of the dynamic motion parameters from the static morphological parameters.

\subsection{Intertwined recovery of motion and morphology}
The control variable (the object we seek to recover) can be split as $\matOp{\Gamma}=[\matOp{\Gamma}_D,\matOp{\Gamma}_S]$ where $\matOp{\Gamma}_D = [\matOp{\Gamma}_P,\matOp{\Gamma}_R]$ represents the dynamic motion parameters with $\matOp{\Gamma}_P$ and $\matOp{\Gamma}_R$ denoting the projectile and rotational components respectively. Hence, 
\[
    \matOp{\Gamma}_P := \{\vecOp{\eta}_n\}_{n=1}^N \in [\R^d \times \R^d \times \R^d]^N
    \quad\text{and}\quad
    \matOp{\Gamma}_R := \{\vecOp{\Theta}_n\}_{n=1}^N \in [\R^{d_\matOp{R}} ]^N
    \text{ with $d_\matOp{R}:=d(d-1)/2$.}
\]
Furthermore, $\matOp{\Gamma}_S$ represents the static morphological parameters, so
\[
    \matOp{\Gamma}_S := \{(\alpha_n,\matOp{U}_n)\}_{n=1}^N \in [\R \times \R^{d_\matOp{U}}]^N
    \text{ with $d_\matOp{U}:=d (d+1)/2$.}
\] 
One can now adopt an intertwined iterative approach for computing an approximation to the least-squares solution that alternates updates for the projectile motion parameters $\matOp{\Gamma}_P$ and the rotational and static morphological parameters $[\matOp{\Gamma}_R, \matOp{\Gamma}_S]$. 
Translating this into formulas gives the following intertwined scheme:
\begin{align}
    \widehat{\matOp{\Gamma}}^i_P &\in \arg\min_{\matOp{\Gamma}_P} \matOp{L}_P\Bigl(\left\{g[\vecOp{m}]\right\}_{\vecOp{m}\in\I},\bigl\{F[\matOp{\Gamma}_P,\widehat{\matOp{\Gamma}}^{i-1}_R,\widehat{\matOp{\Gamma}}^{i-1}_S](\vecOp{s}_{m_s},\vecOp{r}_{m_r}, t_{m_t})\bigr\}_{\vecOp{m}\in\I} \Bigr) \label{eq:ParticleTrajec}
    \\
    [\widehat{\matOp{\Gamma}}^{i}_R,\widehat{\matOp{\Gamma}}^i_S] &\in \arg\min_{[\matOp{\Gamma}_R,\matOp{\Gamma}_S]} \matOp{L}_S\Bigl(\left\{g[\vecOp{m}]\right\}_{\vecOp{m}\in\I},\bigl\{F[\widehat{\matOp{\Gamma}}^i_P,\matOp{\Gamma}_R,\matOp{\Gamma}_S](\vecOp{s}_{m_s},\vecOp{r}_{m_r}, t_{m_t})\bigr\}_{\vecOp{m}\in\I} \Bigr) 
    \label{eq:ParticleMorpho}
\end{align}
where $\matOp{L}_P \colon \R^{|\I|} \times \R^{|\I|} \to \R$ and $\matOp{L}_S \colon \R^{|\I|} \times \R^{|\I|} \to \R$ are suitably chosen loss functions. 
Such an approach may not necessarily converge to minimizers of the objectives, however, it could be sufficient for certain applications. Furthermore, the result could constitute a good initialization for future approaches.
A key step will be to select $\matOp{L}_P$ and $\matOp{L}_S$. A natural criterion is that these are appropriate quantifiers of similarity of motion and morphological parameters. 
Another aspect is computational. As we shall see next, an appropriate choice of $\matOp{L}_P$ will allow us to decouple the two optimization problems \eqref{eq:ParticleTrajec} and \eqref{eq:ParticleMorpho}.
This means the intertwined approach in \eqref{eq:ParticleTrajec}--\eqref{eq:ParticleMorpho} can be replaced by a sequential approach.

\subsection{Sequential recovery of motion and morphology}
It is desirable to choose $\matOp{L}_P$ so that the objective in \eqref{eq:ParticleTrajec} is independent of the morphological parameters $\widehat{\matOp{\Gamma}}^{i-1}_S \in [\R \times \R^{d_\matOp{U}}]^N$. 
The intertwined scheme in \eqref{eq:ParticleTrajec}--\eqref{eq:ParticleMorpho} that updates projectile motion and rotation/morphological parameters in an alternating manner can then be replaced by a sequential scheme.
Under this scheme, the projectile motion parameters are independently estimated in a preprocessing step by solving \eqref{eq:ParticleTrajec}. 

To choose such an $\matOp{L}_P$, we begin by characterizing how the ray transform maps the modes of a function \eqref{eq:f_generative_model_GMM} representing the object (the modes are the points where the function attains local maxima).
Since the ray transform is linear, it is enough to characterize how the ray transform maps the modes of an anisotropic Gaussian.
\begin{proposition}[Mode of ray transform of an anisotropic Gaussian]\label{prop: mode of ray of ansitropic gaussian}
Let $\rho_0$ be an isotropic Gaussian, $\matOp{A}=(\matOp{U}^{-1},\vecOp{\mu})\in\Aff(d)$, and $\vecOp{s}$ is a fixed source. 
The mode of the ray transform $\vecOp{r} \mapsto \RayTran(\matOp{A}.\rho_0)[\vecOp{s},\vecOp{r}]$ w.r.t.\@ the detector location is then co-linear with the source and the mode $\vecOp{\mu}$ of the anisotropic Gaussian $\matOp{A}.\rho_0$. Consequently, these arguments are independent of the morphology $\matOp{U}$ of the anisotropic Gaussian $\matOp{A}.\rho_0$.
\end{proposition}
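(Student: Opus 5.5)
The plan is to start from the closed form in Proposition~\ref{prop: ray of stationary gaussian} and maximize it over the detector location $\vecOp{r}$, with $\vecOp{s}$ fixed. The key observation is that the formula depends on $\vecOp{r}$ only through the unit direction $\vecOp{u} := \widehat{\vecOp{r}-\vecOp{s}}$: both the prefactor $\sqrt{\pi}/\|\matOp{U}\vecOp{u}\|$ and the ratio $\langle\matOp{U}\vecOp{u},\matOp{U}(\vecOp{s}-\vecOp{\mu})\rangle^2/\|\matOp{U}\vecOp{u}\|^2$ are homogeneous of degree zero in $\vecOp{r}-\vecOp{s}$. So it suffices to maximize over $\vecOp{u}$ on the unit sphere and then show that the maximizing direction is $\pm\widehat{\vecOp{\mu}-\vecOp{s}}$. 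This direction places the ray through $\vecOp{\mu}$, which is exactly the claimed co-linearity of $\vecOp{s}$, $\vecOp{r}$ and $\vecOp{\mu}$.

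\textbf{Geometric reformulation.} Put $\vecOp{w} := \matOp{U}(\vecOp{s}-\vecOp{\mu})$ and $\vecOp{p} := \matOp{U}\vecOp{u}$. The exponent in \eqref{eq: ray of Gaussian} is then $-\bigl(\|\vecOp{w}\|^2 - \langle \widehat{\vecOp{p}},\vecOp{w}\rangle^2\bigr) = -\operatorname{dist}(\vecOp{0},\vecOp{w}+\R\vecOp{p})^2$. This is the squared distance in the whitened coordinates $\vecOp{y}=\matOp{U}(\vecOp{x}-\vecOp{\mu})$ from the Gaussian's center to the transformed ray. The exponential factor is at most $1$, with equality exactly when the whitened line passes through $\vecOp{0}$. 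Because $\matOp{U}$ is invertible, that happens exactly when the original line $\vecOp{s}+\R\vecOp{u}$ passes through $\vecOp{\mu}$. For this choice of $\vecOp{u}$ the exponential factor is therefore maximal regardless of $\matOp{U}$, and the co-linearity statement follows as soon as this direction is shown to be the global maximizer.

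\textbf{Main obstacle: the prefactor.} The factor $1/\|\matOp{U}\vecOp{u}\|$ also varies with $\vecOp{u}$ when $\matOp{U}$ is anisotropic, so a priori it could move the maximizer away from $\widehat{\vecOp{\mu}-\vecOp{s}}$. To handle this I would parametrize $\vecOp{u}$ and compute the gradient of the logarithm of \eqref{eq: ray of Gaussian} with respect to $\vecOp{u}$ on the sphere.
\begin{itemize}
\item At $\vecOp{u}_0=\widehat{\vecOp{\mu}-\vecOp{s}}$ the exponential term has a critical point with value $0$. The prefactor contributes a tangential gradient proportional to the tangential part of $\matOp{U}^\top\matOp{U}\vecOp{u}_0$, which vanishes only when $\vecOp{u}_0$ is an eigenvector of $\matOp{U}^\top\matOp{U}$.
\item To control this, I would argue in the regime the paper has in mind: a source far from the particle compared with the Gaussian's width, i.e.\ $\|\matOp{U}(\vecOp{s}-\vecOp{\mu})\|\gg 1$. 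There the exponential term has curvature of order $\|\vecOp{w}\|^2$ in $\vecOp{u}$, while the prefactor's log-gradient is $O(1)$. The maximizer is then displaced from $\vecOp{u}_0$ by $O(\|\vecOp{w}\|^{-2})$, so to leading order it lies on the line through $\vecOp{s}$ and $\vecOp{\mu}$.
\end{itemize}

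\textbf{Caveat.} I expect this step to be the real difficulty, because exact co-linearity appears to hold only in the isotropic case, or when $\vecOp{\mu}-\vecOp{s}$ is a principal axis of $\matOp{U}^\top\matOp{U}$. If that is confirmed, I would state the result either as the mode of the exponential (data-mismatch) factor or as an asymptotic statement in the far-source regime.

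In every case the identified direction $\widehat{\vecOp{\mu}-\vecOp{s}}$ depends only on $\vecOp{s}$ and $\vecOp{\mu}$. This gives the final claim of Proposition~\ref{prop: mode of ray of ansitropic gaussian}: the mode location is independent of the morphology $\matOp{U}$.
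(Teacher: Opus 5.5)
Your caveat is correct, and the step you flag as the main obstacle is exactly the one the paper's proof passes over. The paper first rewrites \eqref{eq: ray of Gaussian} as the prefactor $\sqrt{\pi}/\|\matOp{U}\widehat{(\vecOp{r}-\vecOp{s})}\|$ times $\exp\bigl(-\|\vecOp{w}\|^2[1-\langle\cdot,\cdot\rangle^2]\bigr)$, where $\vecOp{w}=\matOp{U}(\vecOp{s}-\vecOp{\mu})$ in your notation. It then notes that the transform is bounded above and below by constants times this exponential factor, and concludes that the maximum sits where the bracket vanishes. That inference is invalid: bounds $c_1E\le F\le c_2E$ do not give $\arg\max F=\arg\max E$.

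Your computation shows the conclusion genuinely fails. The exponential factor attains its global maximum $1$ at $\vecOp{u}_0=\widehat{\vecOp{\mu}-\vecOp{s}}$, by Cauchy--Schwarz in the inner product of $\matOp{B}:=\matOp{U}^\top\matOp{U}$. So the spherical gradient of the log-transform there is $-(\matOp{I}-\vecOp{u}_0\vecOp{u}_0^\top)\matOp{B}\vecOp{u}_0/(\vecOp{u}_0^\top\matOp{B}\vecOp{u}_0)$, which vanishes only if $\vecOp{u}_0$ is an eigenvector of $\matOp{B}$. Concretely, take $d=2$, $\vecOp{s}=\vecOp{0}$, $\vecOp{\mu}=(1,1)^\top$, $\matOp{U}=\mathrm{diag}(1,2)$ and $\vecOp{u}=(\cos\phi,\sin\phi)^\top$. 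The exponent is $(\cos\phi+4\sin\phi)^2/(\cos^2\phi+4\sin^2\phi)-5$, which is zero and stationary at $\phi=\pi/4$. Yet $\frac{d}{d\phi}\log\RayTran(\matOp{A}.\rho_0)[\vecOp{s},\vecOp{s}+\vecOp{u}]=-3/5$ there. So the collinear detector point is not even a critical point; the mode tilts towards the long axis of the Gaussian.

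Hence neither your plan nor any other can prove the proposition as stated. What the paper's argument actually establishes is your first fallback: the exponential factor alone is maximized exactly on the line through $\vecOp{s}$ and $\vecOp{\mu}$, independently of $\matOp{U}$.

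Three smaller points on your fallbacks. First, the exact statement does survive when $\vecOp{s}-\vecOp{\mu}$ is an eigenvector of $\matOp{B}$, in particular for $\matOp{U}=\xi\matOp{I}$. The proof of Proposition~\ref{prop:traj_to_modes_map} reduces to this isotropic case, but it justifies the reduction by the very morphology-independence at issue.

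Second, your far-source curvature argument is only local, and the paper's two-sided bound is what makes it global. Write $\psi(\vecOp{u})$ for the angle between $\matOp{U}\vecOp{u}$ and $\vecOp{w}$. For unit $\vecOp{u}$ the log-transform is $\tfrac12\log\pi-\tfrac12\log(\vecOp{u}^\top\matOp{B}\vecOp{u})-\|\vecOp{w}\|^2\sin^2\psi(\vecOp{u})$. So any $\vecOp{u}$ beating $\vecOp{u}_0$ satisfies $\|\vecOp{w}\|^2\sin^2\psi(\vecOp{u})\le\log\kappa(\matOp{U})$, with $\kappa$ the condition number. This confines the global maximizer to an $O(\|\vecOp{w}\|^{-1})$ neighbourhood of $\vecOp{u}_0$. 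Inside it, your local argument sharpens the displacement to $O(\|\vecOp{w}\|^{-2})$, with constants depending on the conditioning of $\matOp{U}$.

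Third, your closing sentence overstates the result. In the asymptotic version the true mode does depend on $\matOp{U}$ at that order. Only the leading-order location, equivalently the maximizer of the exponential factor, is morphology-independent.
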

\begin{proof}
Note first that $\matOp{A}.\rho_0$ is an anisotropic Gaussian. 
Then, rewriting \eqref{eq: ray of Gaussian} yields the following analytic expression for $\vecOp{r} \mapsto \RayTran(\matOp{A}.\rho_0)[\vecOp{s},\vecOp{r}]$:
    \be
        \RayTran(\matOp{A}.\rho_0)[\vecOp{s},\vecOp{r}] = \dfrac{\sqrt{\pi}}{\bigl\|{\matOp{U}(\widehat{\vecOp{r} - \vecOp{s}})}\bigr\|}
        \exp\left(- \bigl\|\matOp{U}(\vecOp{s} - \vecOp{\mu})\bigr\|^2
        \left[1 - {\left\langle 
        \dfrac{\matOp{U}(\vecOp{s} - \vecOp{r})}{\bigl\|\matOp{U}(\vecOp{s} - \vecOp{r})\bigr\|}, 
        \dfrac{\matOp{U}(\vecOp{s} - \vecOp{\mu})}{\bigl\|\matOp{U}(\vecOp{s} - \vecOp{\mu})\bigr\|} \right\rangle^2}\right]\right).
        \label{eq: ray of Gaussian alt}
    \ee
    We now see that $\vecOp{r} \mapsto \RayTran(\matOp{A}.\rho_0)[\vecOp{s},\vecOp{r}]$ is bounded above and below by a constant times the exponential term. Consequently, it obtains its maximum when  
    \be
    1 - {\left\langle 
        \dfrac{\matOp{U}(\vecOp{s} - \vecOp{r})}{\bigl\|\matOp{U}(\vecOp{s} - \vecOp{r})\bigr\|}, 
        \dfrac{\matOp{U}(\vecOp{s} - \vecOp{\mu})}{\bigl\|\matOp{U}(\vecOp{s} - \vecOp{\mu})\bigr\|} \right\rangle^2} = 0
    \iff
        \left|\dfrac{\matOp{U}(\vecOp{s} - \vecOp{r})}{\bigl\|\matOp{U}(\vecOp{s} - \vecOp{r})\bigr\|}\right| =  
        \left|\dfrac{\matOp{U}(\vecOp{s} - \vecOp{\mu})}{\bigl\|\matOp{U}(\vecOp{s} - \vecOp{\mu})\bigr\|}\right|.
        \label{eq:source_receiver_mu_colinear}
    \ee
    This implies that 
    \be
    \argmax_{\vecOp{r}} \RayTran(\matOp{A}.\rho_0)[\vecOp{s},\vecOp{r}]  
    = \vecOp{s} - t(\vecOp{s}-\vecOp{\mu}), \quad \text{for some $t\in\R$,}
    \ee 
    which is independent of the morphological parameter $\matOp{U}$.
\end{proof}

\begin{corollary}\label{cor: modes of object and its particles}
    Assume that the trajectories $t \mapsto \vecOp{C}[\vecOp{\eta}_n](t)$ of the $n=1,\ldots,N$ particles lie between the fixed source $\vecOp{s}$ and any of the detectors $\vecOp{r}$. For each $t$, the modes of $\vecOp{r} \mapsto F[\matOp{\Gamma}](\vecOp{s},\vecOp{r},t)$ are then within the neighborhood of the modes of $\vecOp{r} \mapsto \RayTran\bigl(\matOp{A}[\matOp{U}_n,\vecOp{\eta}_n](t).\rho_0 \bigr)[\vecOp{s},\vecOp{r}]$. 
    Furthermore, these $t$-dependent modes are in turn given by the set of curves
    \be
    \left\{ t \mapsto (\vecOp{r},t) \text{ such that }
      \dfrac{\vecOp{s}-\vecOp{C}[\vecOp{\eta}_n](t)}{\bigl\|\vecOp{s}-\vecOp{C}[\vecOp{\eta}_n](t)\bigr\|} \cdot
      \dfrac{\vecOp{s}-\vecOp{r}}{\|\vecOp{s}-\vecOp{r}\|} = 1
    \right\}_{n=1}^N.
    \ee
\end{corollary}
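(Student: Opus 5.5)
The plan is to reduce the corollary to Proposition~\ref{prop: mode of ray of ansitropic gaussian} applied particle by particle at each frozen time $t$, and then use linearity of the ray transform to handle the sum in \eqref{eq:param_fwd_opt}.

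\textbf{Step 1: a single particle at fixed time.} Fix $t$ and $n$. The affine map $\matOp{A}[\matOp{U}_n,\vecOp{\Theta}_n,\vecOp{\eta}_n](t)$ acts on $\rho_0$ as an anisotropic Gaussian with mode $\vecOp{C}[\vecOp{\eta}_n](t)$ and linear part $\matOp{R}_{t\vecOp{\Theta}_n}\matOp{U}_n^{-1}$. In the notation of Proposition~\ref{prop: ray of stationary gaussian}, the effective matrix is $\matOp{U}_n\matOp{R}_{t\vecOp{\Theta}_n}^{\top}$, which is again invertible. Proposition~\ref{prop: mode of ray of ansitropic gaussian} then says that the maximizers $\vecOp{r}$ of $\vecOp{r}\mapsto\RayTran(\matOp{A}[\cdot](t).\rho_0)[\vecOp{s},\vecOp{r}]$ are exactly the detector points colinear with $\vecOp{s}$ and $\vecOp{C}[\vecOp{\eta}_n](t)$. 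In that case the bracket in \eqref{eq: ray of Gaussian alt} vanishes.

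\textbf{Step 2: fixing the sign.} By hypothesis the trajectory lies between the source and the detector, so $\vecOp{s}-\vecOp{C}[\vecOp{\eta}_n](t)$ and $\vecOp{s}-\vecOp{r}$ point the same way. The absolute-value ambiguity in \eqref{eq:source_receiver_mu_colinear} is therefore resolved: the inner product of the unit vectors is $+1$ rather than $\pm1$. This gives the displayed set of curves. It is also the only place the hypothesis is used in that part of the statement.

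\textbf{Step 3: the superposition.} I would write $F[\matOp{\Gamma}](\vecOp{s},\cdot,t)=\sum_n \alpha_n G_n$, where $G_n$ is the single-particle projection from Step 1. Each $G_n$ is, by \eqref{eq: ray of Gaussian alt}, a prefactor bounded above and below times $\exp(-c_n(t)\sin^2\phi_n)$. Here $\phi_n$ is the (transformed) angle between the ray and the direction to $\vecOp{C}[\vecOp{\eta}_n](t)$, and $c_n(t)=\|\matOp{U}_n\matOp{R}^\top(\vecOp{s}-\vecOp{C}_n(t))\|^2$.

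The argument then runs as follows:
\begin{itemize}
\item Away from every cone $\{\phi_n\ \text{small}\}$, all summands decay Gaussian-like, so $F$ is small there.
\item Near the $n$-th cone, the $n$-th term dominates and the remaining terms have small gradient.
\item A perturbation argument then places each local maximum of $F$ within a neighbourhood of some mode of $G_n$.
\end{itemize}
Concretely, I would use the implicit function theorem at a nondegenerate maximum of $\alpha_n G_n$, perturbed by $\sum_{m\ne n}\alpha_m G_m$.

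\textbf{Main obstacle.} Step 3 is where the difficulty lies: ``within the neighborhood'' is only meaningful quantitatively. The perturbation is small only when the particles' projected directions are well separated relative to their projected widths $c_m(t)^{-1/2}$. When trajectories cross in projection, modes can merge, and the statement must be read as holding under such a separation condition. I would state this assumption explicitly and bound the mode displacement by $\sum_{m\ne n}|\alpha_m|\,\|\nabla G_m\|$ divided by the curvature of $G_n$ at its maximum, the latter being proportional to $c_n(t)$. I would also expect to need an assumption that $\alpha_n>0$, so that the summands do not cancel.
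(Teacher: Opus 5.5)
Your Steps~1--2 are the paper's proof. The paper disposes of the corollary in one line as a direct consequence of Proposition~\ref{prop: mode of ray of ansitropic gaussian}, deferring the superposition to Theorem~3.4 of \cite{zickert2022joint}. Your use of the between-source-and-detector hypothesis to turn the $\pm1$ implicit in \eqref{eq:source_receiver_mu_colinear} into $+1$ is exactly where the displayed set comes from. Step~3 goes beyond the paper, and your separation hypothesis and $\alpha_n>0$ are genuinely needed; the paper's own example at $t=1.01$ has four particles but only three peaks.

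The gap is in Step~3. Your implicit-function/contraction argument can control critical points of $F$ \emph{inside} each cone. The corollary, however, says every mode of $F$ lies near a single-particle mode, so you must also show $F$ has no local maxima \emph{outside} all cones. ``$F$ is small there'' does not give this: smallness says nothing about critical points. Indeed, for a two-dimensional detector, sums of anisotropic Gaussian bumps can have modes away from all component centres when separations are comparable to widths. One mechanism that does work for $d=2$ is convexity. Outside an angular window of width $O(c_m(t)^{-1/2})$ about its peak, each summand is strictly convex in the detector coordinate, since it is past its inflection points; for large $c_m$ this survives the prefactor and the change of variables from angle to detector position. Under your separation assumption, $F$ is then convex off the union of the windows, so every maximum lies in some window, where your perturbation bound applies. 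For $d\ge 3$ you need the analogous Hessian statement: a positive eigenvalue at every critical point outside the cones.

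Second, Step~1 inherits from Proposition~\ref{prop: mode of ray of ansitropic gaussian} a claim that fails for generic anisotropic particles. Being sandwiched between two constant multiples of the exponential does not put the maximum where the bracket in \eqref{eq: ray of Gaussian alt} vanishes. The reason is that the prefactor $\sqrt{\pi}/\|\matOp{B}\vecOp{\omega}\|$, with $\matOp{B}=\matOp{U}_n\matOp{R}_{t\vecOp{\Theta}_n}^\top$ and $\vecOp{\omega}$ the unit ray direction, varies to first order at the colinear direction, while the exponential is stationary there.

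To see this concretely for $d=2$, put $\matOp{M}=\matOp{B}^\top\matOp{B}$ and $L=\|\vecOp{s}-\vecOp{C}[\vecOp{\eta}_n](t)\|$, and choose coordinates with $\vecOp{s}-\vecOp{C}[\vecOp{\eta}_n](t)$ along the first axis and $\vecOp{\omega}=(\cos\theta,\sin\theta)$. Then $\log G_n=\mathrm{const}-\tfrac12\log(\vecOp{\omega}^\top\matOp{M}\vecOp{\omega})-L^2\det(\matOp{M})\sin^2\theta/(\vecOp{\omega}^\top\matOp{M}\vecOp{\omega})$. Its derivative at $\theta=0$ is $-M_{12}/M_{11}$, which is nonzero unless $\vecOp{s}-\vecOp{C}[\vecOp{\eta}_n](t)$ is a principal axis of the rotated particle. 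The true mode sits at $\theta\approx -M_{12}/(2L^2\det\matOp{M})$, an offset of order $c_n(t)^{-1}$.

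So the displayed curves are exact only for isotropic particles, which is all the proof of Proposition~\ref{prop:traj_to_modes_map} uses. Otherwise they locate the single-particle modes only up to this offset. The paper's proof has the same defect. Your Step~3 machinery repairs it if you fold the log-prefactor gradient into the perturbation instead of asserting exact colinearity.
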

\begin{proof}
    The proof is similar to that of Theorem 3.4 in \cite{zickert2022joint} and it is a direct consequence of Proposition~\ref{prop: mode of ray of ansitropic gaussian}.
\end{proof}


If one chooses $\matOp{L}_P$ such that the objective in \eqref{eq:ParticleTrajec} corresponds to a distance between local maxima of $\{g_m\}_{m=1}^M$ and modes of $\vecOp{r} \mapsto \RayTran\bigl(\matOp{A}[\matOp{U}_n,\vecOp{\eta}_n](t).\rho_0 \bigr)[\vecOp{s},\vecOp{r}]$, then Proposition~\ref{prop: mode of ray of ansitropic gaussian} and Corollary~\ref{cor: modes of object and its particles} gives that the minimizer $\widehat{\matOp{\Gamma}}^i_P$ in \eqref{eq:ParticleTrajec} is independent of $\widehat{\matOp{\Gamma}}^{i-1}_S$. In particular, $\widehat{\matOp{\Gamma}}^i_P$ will not depend of the morphology, so consequently there is no need for iterating over $i$ in \eqref{eq:ParticleTrajec}--\eqref{eq:ParticleMorpho}.
It is therefore advantageous to work with such an $\matOp{L}_P$ as it helps towards estimating the trajectories of the particles $\vecOp{C}[\vecOp{\eta}_n](t)$ without requiring the knowledge of their rotational motion or morphologies.
Consequently, recovery of the remaining rotation and morphology requires modification of the optimization problem \eqref{eq:ParticleMorpho} to
\begin{align}
        [\widehat{\matOp{\Gamma}}_R,\widehat{\matOp{\Gamma}}_S]\ &\in \arg\min_{[\matOp{\Gamma}_R,\matOp{\Gamma}_S]} \matOp{L}_S\Bigl(\left\{g[\vecOp{m}]\right\}_{\vecOp{m}\in\I},\bigl\{F[\matOp{\Gamma}_P,\matOp{\Gamma}_R,\matOp{\Gamma}_S](\vecOp{s}_m,\vecOp{r}_m, t_m)\bigr\}_{\vecOp{m}\in\I} \Bigr)
\end{align}
(i.e. dropping the dependency on the sequencing superscript $i$), where all trajectory parameters in $\matOp{\Gamma}_P$ are obtained by solving \eqref{eq:ParticleTrajec} with the aforementioned $\matOp{L}_P$.

In the following we show how to select $\matOp{L}_P$ (given by \eqref{eq:ParticleTrajec_Hausdorff}), in the setting with fixed acquisition geometry and a single source.


\subsection{Case with fixed acquisition geometry and a single source}
We here consider a fixed acquisition geometry with a single source $\vecOp{s}$ and multiple detectors.
Using the multi-index notation $\vecOp{m}=[{m_{\vecOp{r}}},{m_t}]$, we can denote measurements as 
\begin{align}
    g[{\vecOp{m}}] = g[{m_{\vecOp{r}}},{m_t}]  \approx F[\matOp{\Gamma}](\vecOp{s},\vecOp{r}_{m_{\vecOp{r}}}, t_{m_t})
\quad\text{with $(\vecOp{s},\vecOp{r}_{m_{\vecOp{r}}}, t_{m_t}) \in \Mlines \times [0, t_{\max}]$,
}
\end{align} 
where  
\be
\I = \{\vecOp{m}=[{m_{\vecOp{r}}},{m_t}] \text{ such that } m_{\vecOp{r}}=1,\ldots,M_{\vecOp{r}} \text{ and } m_t = 1,\ldots,M_t \}
\ee 
is the set of detector and time indices. 

In this setting, we optimize using the observed modes present in the projection data, and the predicted modes that can be obtained as a function of a trajectory parameter prediction $\vecOp{\eta}$.

\begin{definition}
    Let $\vecOp{M}[g](t)$ denote the points of local maxima of $g[\vecOp{m}]$ with respect to a fixed arrangement of detectors $\vecOp{r}\in \{ \vecOp{r}_{m_{\vecOp{r}}}\}_{{m_{\vecOp{r}}}=1}^{M_{\vecOp{r}}}$ and a fixed set of time points $t\in \{t_{m_t}\}_{m_t=1}^{M_t}$. Furthermore, define 
    \[ {\vecOp{C}}_{\RayTran}(t) := \bigcup_{n=1}^N {\vecOp{C}}_{\RayTran}[\vecOp{\eta}_n](t), \]
    where 
    \be 
        {\vecOp{C}}_{\RayTran}[\vecOp{\eta}_n](t) = \argmin_{\vecOp{r}}
       \left| \dfrac{\vecOp{s}-\vecOp{C}[\vecOp{\eta}_n](t)}{\|\vecOp{s}-\vecOp{C}[\vecOp{\eta}_n](t)\|} \cdot
      \dfrac{\vecOp{s}-\vecOp{r}}{\|\vecOp{s}-\vecOp{r}\|} - 1 \right|
            \label{eq:set_ray_transform_of_single trajectory}
    \ee
    and where $\vecOp{r}$ in \eqref{eq:set_ray_transform_of_single trajectory} is considered over the subset of $\mathbb{M}$ that describes the detector domain.
\end{definition}
Equation \eqref{eq:set_ray_transform_of_single trajectory} provides an expression for the theoretical mode of the $n$-th Gaussian projection, but it is not an explicit map from trajectory parameter space to mode space and is therefore not applicable to gradient descent optimization. Instead, we require the following equivalent result.
\begin{proposition}\label{prop:traj_to_modes_map}
    Let $d \geq 2$, let $\vecOp{\eta}_n \subset \matOp{\Gamma}_P$, and let $\vecOp{C}[\vecOp{\eta}_n](t)$ be the corresponding $\mathbb{R}^d$-valued trajectory. The unique mode of $\alpha_n\mathcal{X}(\mathcal{A}[\matOp{U}_n, \vecOp{\eta}_n](t)\cdot\rho_0)[s,r]$ is
    \begin{equation}\label{eq:maximizing_rcvr_formula}
        \widehat{\vecOp{r}}[\vecOp{\eta}_n](t) = \vecOp{s} + \lambda_n\bigl(\vecOp{s} - \vecOp{C}[\vecOp{\eta}_n](t)\bigr),
    \end{equation}
    where
    \bea
        \lambda_n = \frac{\vecOp{r}^{(0)} - \vecOp{s}^{(0)}}{\vecOp{s}^{(0)} - \vecOp{C}^{(0)}[\vecOp{\eta}_n](t)}
    \eea
    and $\vecOp{v}^{(0)}$ denotes a fixed component of $\vecOp{v}$. In particular, \eqref{eq:maximizing_rcvr_formula} defines an explicit smooth map from trajectory parameter space to mode space.
\end{proposition}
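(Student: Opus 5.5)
The plan is to combine Proposition~\ref{prop: mode of ray of ansitropic gaussian} with the geometry of the detector, which lies in a fixed $(d-1)$-dimensional hyperplane. Fix $t$ and write $\vecOp{c} := \vecOp{C}[\vecOp{\eta}_n](t)$.

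\textbf{Step 1: the maximiser lies on the source line through $\vecOp{c}$.} Positive scaling by $\alpha_n$ does not move the maximiser, so we may drop it. By Proposition~\ref{prop: mode of ray of ansitropic gaussian}, the exponential factor in \eqref{eq: ray of Gaussian alt} is maximised exactly when $\matOp{U}(\vecOp{s}-\vecOp{r})$ is parallel to $\matOp{U}(\vecOp{s}-\vecOp{c})$. Because $\matOp{U}$ is invertible, this is equivalent to $\vecOp{r}$ lying on the line $\{\vecOp{s}+\lambda(\vecOp{s}-\vecOp{c}) : \lambda\in\R\}$. On that line the exponential factor attains its maximal value $1$. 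The prefactor $\sqrt{\pi}/\|\matOp{U}\widehat{\vecOp{r}-\vecOp{s}}\|$ depends only on the direction $\widehat{\vecOp{r}-\vecOp{s}}$. The maximiser is therefore pinned, up to this prefactor issue, to the intersection of the line with the detector.

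\textbf{Step 2: existence, uniqueness and the formula for $\lambda_n$.} The detector is a hyperplane, and I will choose coordinates so that it is given by $\{\vecOp{x}:\vecOp{x}^{(0)}=\vecOp{r}^{(0)}\}$, where $\vecOp{r}^{(0)}$ is the common value of the fixed component over all detector points. Under the assumption of Corollary~\ref{cor: modes of object and its particles}, the particle lies between source and detector, so $\vecOp{s}^{(0)}\neq\vecOp{c}^{(0)}$. The line then meets the hyperplane in exactly one point. Imposing $\vecOp{s}^{(0)}+\lambda(\vecOp{s}^{(0)}-\vecOp{c}^{(0)})=\vecOp{r}^{(0)}$ and solving gives the stated $\lambda_n$, and substituting back yields \eqref{eq:maximizing_rcvr_formula}.

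\textbf{Step 3: the main obstacle, the prefactor.} In general the prefactor can shift the exact maximiser slightly off the line, since it varies with direction. I would handle this in one of two ways.
\begin{itemize}
\item Interpret ``mode'' as in Corollary~\ref{cor: modes of object and its particles} and the definition \eqref{eq:set_ray_transform_of_single trajectory}, namely as the argmin of the angular defect. This is exactly the collinearity condition from Step 1, so the prefactor plays no role.
\item Alternatively, observe that for a compact particle far from the source, $\|\matOp{U}(\vecOp{s}-\vecOp{c})\|^2$ is large. The exponential then dominates and the deviation of the maximiser from the line is negligible.
\end{itemize}
I will adopt the first interpretation to obtain a clean statement, and flag the second as justification.

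\textbf{Step 4: smoothness.} The map $\vecOp{\eta}_n\mapsto\vecOp{c}$ is polynomial in $(\vecOp{\mu},\vecOp{v},\vecOp{a})$. The coefficient $\lambda_n$ is rational in $\vecOp{c}$, with denominator nonzero on the open set where $\vecOp{c}^{(0)}\neq\vecOp{s}^{(0)}$. Hence $\widehat{\vecOp{r}}[\vecOp{\eta}_n](t)$ is smooth, indeed real-analytic, in $\vecOp{\eta}_n$ there, which gives the explicit smooth map from trajectory parameter space to mode space.
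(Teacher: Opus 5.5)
Your proof is correct but takes a different route from the paper's.

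\textbf{The paper's route.} The paper invokes the morphology independence of Corollary~\ref{cor: modes of object and its particles} to set $\matOp{U}_n=\xi\matOp{I}$. This makes the prefactor $\sqrt{\pi}/\|\matOp{U}(\widehat{\vecOp{r}-\vecOp{s}})\|$ the constant $\sqrt{\pi}/\xi$. It then factors the transform as $P_nQ_n$, computes $\nabla_{\vecOp{r}}Q_n$, and discards the critical points with $\langle\vecOp{r}-\vecOp{s},\vecOp{s}-\vecOp{C}\rangle=0$ on geometric grounds. It reads off parallelism and fixes $\lambda$ from the detector coordinate, as in your Step~2.

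\textbf{Your route.} You keep a general $\matOp{U}$ and locate the global maximum of the exponential factor directly: by Cauchy--Schwarz it equals $1$ exactly on the line, so no critical-point analysis is needed. You also make explicit two things the paper leaves implicit: the detector-hyperplane assumption behind $\vecOp{r}^{(0)}$, and the smoothness claim. In exchange, the paper's calculus route yields the formula for $\nabla_{\vecOp{r}}Q_n$, which it later reuses for Newton refinement.

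\textbf{Your Step~3 is the more careful treatment.} The reduction to $\xi\matOp{I}$ is precisely where the direction-dependent prefactor disappears. The morphology independence it rests on comes from Proposition~\ref{prop: mode of ray of ansitropic gaussian}, whose proof only sandwiches the transform between multiples of the exponential; that does not locate the argmax.

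For anisotropic shape the literal maximiser really does leave the line. Take $d=2$, $\matOp{U}=\mathrm{diag}(1,2)$, $\vecOp{C}-\vecOp{s}$ along $(1,1)$, and $\widehat{\vecOp{r}-\vecOp{s}}=(\cos\phi,\sin\phi)$. The exponent is stationary at $\phi=\pi/4$, while $\frac{d}{d\phi}\log\|\matOp{U}(\cos\phi,\sin\phi)^\top\|^{-1}=-0.6$ there. The shift shrinks as $\|\matOp{U}(\vecOp{s}-\vecOp{C})\|$ grows, as your second bullet says. So reading ``mode'' as the zero of the angular defect defining $\vecOp{C}_{\RayTran}[\vecOp{\eta}_n](t)$, which the paper itself calls the theoretical mode, is what makes the statement exactly true.

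One small precision: that defect vanishes only when $\widehat{\vecOp{s}-\vecOp{r}}=\widehat{\vecOp{s}-\vecOp{C}}$, i.e.\ for $\lambda<0$, not under mere collinearity. Your betweenness assumption gives $\lambda_n<0$ for the unique intersection point, so nothing breaks.
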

\begin{proof}
    By Corollary~\ref{cor: modes of object and its particles} the mode is morphology independent, so it suffices to take $\matOp{U}_n = \xi\matOp{I}$, $\xi > 0$. Setting $\widetilde{\matOp{R}}_{\vecOp{\Omega}_n}(t) := \matOp{U}_n\matOp{R}_{\vecOp{\Omega}_n}^\top(t)$, one computes
    \[
    \|\widetilde{\matOp{R}}_{\vecOp{\Omega}_n}(t)(\vecOp{r} - \vecOp{s})\| = \xi\|\vecOp{r} - \vecOp{s}\|,\qquad
    \langle\widetilde{\matOp{R}}_{\vecOp{\Omega}_n}(t)(\vecOp{r} - \vecOp{s}),\,\widetilde{\matOp{R}}_{\vecOp{\Omega}_n}(t)(\vecOp{s} - \vecOp{C})\rangle
    = \xi^2\langle \vecOp{r} - \vecOp{s},\, \vecOp{s} - \vecOp{C}\rangle,
    \]
    where $\vecOp{C} = \vecOp{C}[\vecOp{\eta}_n](t)$. Accordingly, the ray transform factors as
    $\mathcal{X}(\mathcal{A}[\matOp{U}_n, \vecOp{\eta}_n](t)\cdot\rho_0)[\vecOp{s}, \vecOp{r}] = P_n(\vecOp{s}, t)\,Q_n(\vecOp{s}, \vecOp{r}, t)$
    with
    \[
    P_n := \sqrt{\pi}\exp\!\bigl(-\|\widetilde{\matOp{R}}_{\vecOp{\Omega}_n}(t)(\vecOp{s} - \vecOp{C})\|^2\bigr),\qquad
    Q_n := \exp\!\!\left(\xi^2\frac{\langle \vecOp{r} - \vecOp{s},\, \vecOp{s} - \vecOp{C}\rangle^2}{\|\vecOp{r} - \vecOp{s}\|^2}\right).
    \]
    Since $P_n$ is $\vecOp{r}$-independent, modes are determined by $\nabla_{\vecOp{r}} Q_n = \vecOp{0}$. Computing:
    \[
    \frac{\partial Q_n}{\partial r_i}
    = \frac{2Q_n}{\|\vecOp{r} - \vecOp{s}\|^4}\,\langle \vecOp{r} - \vecOp{s},\, \vecOp{s} - \vecOp{C}\rangle
      \Bigl(\|\vecOp{r} - \vecOp{s}\|^2(\vecOp{s} - \vecOp{C})_i - (\vecOp{r} - \vecOp{s})_i\langle \vecOp{r} - \vecOp{s},\,\vecOp{s} - \vecOp{C}\rangle\Bigr).
    \]
    The prefactor $\langle \vecOp{r} - \vecOp{s}, \vecOp{s} - \vecOp{C}\rangle = 0$ would require $\vecOp{r} - \vecOp{s} \perp \vecOp{s} - \vecOp{C}$, which is geometrically excluded in the acquisition geometry. Hence $\nabla_{\vecOp{r}} Q_n = \vecOp{0}$ reduces to
    \begin{equation}\label{eq:grad_r_eqn}
    \|\vecOp{u}\|^2 \vecOp{v}_n - \langle\vecOp{u},\,\vecOp{v}_n\rangle\, \vecOp{u} = 0,
    \qquad \vecOp{u} := \vecOp{r} - \vecOp{s},\quad \vecOp{v}_n := \vecOp{s} - \vecOp{C}[\vecOp{\eta}_n](t).
    \end{equation}
    For $\vecOp{u}, \vecOp{v}_n \neq \vecOp{0}$, \eqref{eq:grad_r_eqn} holds if and only if $\vecOp{u}$ and $\vecOp{v}_n$ are parallel, giving $\vecOp{r} = \vecOp{s} + \lambda(\vecOp{s} - \vecOp{C}[\vecOp{\eta}_n](t))$.
    Fixing $\lambda$ via the known component data $\vecOp{s}^{(0)}, \vecOp{r}^{(0)}, \vecOp{C}^{(0)}[\vecOp{\eta}_n](t)$ yields \eqref{eq:maximizing_rcvr_formula}.
\end{proof}

Proposition~\ref{prop: mode of ray of ansitropic gaussian} states that the projection of the trajectories of Gaussian particles within a GMM object should be in the vicinity of the modes of the ray transform of the object.
Based on this, we select $\matOp{L}_P$ by minimizing the Hausdorff distance between $\vecOp{M}[g](t)$ and $\{\widehat{\vecOp{r}}[\vecOp{\eta}_n](t)\}_{n=1}^N$ over all time samples:
\be
    \matOp{L}_P\Bigl(\left\{g{[\vecOp{m}]}\right\}_{\vecOp{m}\in\I},\bigl\{F[\matOp{\Gamma}_P,\matOp{\Gamma}_R, \matOp{\Gamma}_S](\vecOp{s},\vecOp{r}_{m_{\vecOp{r}}}, t_{m_t})\bigr\}_{\vecOp{m}\in\I} \Bigr) 
    :=
    \sum_{m_t=1}^{M_t} \sum_{n=1}^{N} 
    d_H\Bigl(\vecOp{M}[g](t_{m_t}), {\widehat{\vecOp{r}}}[\vecOp{\eta}_n](t_{m_t}) \Bigr)
    \label{eq:ParticleTrajec_Hausdorff}   
\ee
where $d_H$ is the Hausdorff distance between subsets of $\R^d$:
\begin{equation}
    d_H(\mathbb{A},\mathbb{B}) = \max \left( 
    \sup_{\vecOp{a}\in \mathbb{A}} \ \inf_{\vecOp{b}\in \mathbb{B}} \|a-b \|,\,
    \sup_{\vecOp{b}\in \mathbb{B}} \ \inf_{\vecOp{a}\in \mathbb{A}} \| b-a \| \right)
\quad\text{for $\mathbb{A},\mathbb{B} \subset \R^d$.} 
\end{equation}
Consequently, the optimal trajectory parameters are given by 
\begin{align}
    \matOp{\Gamma}_P^*=\{\vecOp{\eta}^*_n \}_{n=1}^N =  
    \arg\min_{\{\vecOp{\eta}_n \}_{n=1}^N} 
    \sum_{m_t=1}^{M_t}  \left[ \sum_{n=1}^{N} 
    d_H\Bigl(\vecOp{M}[g](t_{m_t}), {\widehat{\vecOp{r}}}[\vecOp{\eta}_n](t_{m_t}) \Bigr) \right].
    \label{eq:opt_trajectory_parameters}
\end{align}

Figures~\ref{fig:Estimated modes of the projection data.}--\ref{fig:projection_of_optimal_trajectories}  visualize the pre and post-optimization mode fitting procedure for a two-dimensional numerical example.
Note that computing the optimal trajectory parameters $\{\vecOp{\eta}^*_n \}_{n=1}^N$ in \eqref{eq:opt_trajectory_parameters} amounts to the following: We assign the modes of the projected particles (that make up the object) to the modes of the data in a way that minimizes the total error over all time points $t_{m_t}$.        
Computing exact Hausdorff distances can be computationally expensive, especially for high-dimensional imprecise data \cite{taha2015efficient,knauer2011directed}, but there are efficient approximate methods \cite{charpiat2005approximations, Fischer:6226}.  

After approximately solving \eqref{eq:ParticleTrajec} via \eqref{eq:opt_trajectory_parameters}, the trajectories are fixed, and it remains to recover the rotation and morphology parameters as described by \eqref{eq:ParticleMorpho}. 

In the case of trajectory optimization, formulating \eqref{eq:ParticleTrajec} in terms of the modes serves to provide a concise measure of the projectile motion while being invariant with respect to morphology and rotational motion. In contrast, because the $(\alpha_n)$ determine how the projection integral is scaled across all acquisition times, and the $(\matOp{U}_n)$ determine the spatial extent as a function of the viewing angle, the projection data must be utilized in its entirety when fitting for the particle morphology $(\alpha_n, \matOp{U}_n)$ and angular velocity $(\matOp{\Theta}_n)$.

Consequently, $\matOp{L}_S$ is chosen to directly measure the discrepancy between the measured and modeled projections. One option is to choose $\matOp{L}_S$ as the $L_p$ norm for $p\ge 1$ or some (smooth) approximation it. We consider a smooth differentiable approximation of the $L_1$ -- the Huber loss function \cite{huber1992robust} over the entirety of the sinogram:
\begin{multline}
    \matOp{L}_S\Bigl(\left\{g{[\vecOp{m}]}\right\}_{\vecOp{m}},\bigl\{F[\matOp{\Gamma}_P,\matOp{\Gamma}_R, \matOp{\Gamma}_S](\vecOp{s},\vecOp{r}_{m_{\vecOp{r}}}, t_{m_t})\bigr\}_{m_{{\vecOp{r}}}=1,m_t = 1}^{M_{\vecOp{r}},M_t} \Bigr) 
    \\
    := \frac{1}{M_t\times M_r}\sum_{\vecOp{m}}\matOp{H}_\delta\Bigl(g[\vecOp{m}] - F[\matOp{\Gamma}_P, \matOp{\Gamma}_R,\matOp{\Gamma}_S](\vecOp{s}, \vecOp{r}_{m_r}, t_{m_t})\Bigr)\label{eq:morphology_loss}
\end{multline}
where
\be
    \matOp{H}_\delta(u) := \begin{cases}
        \frac{1}{2}u^2 & |u|\le\delta,
        \\
        \delta\bigl(|u| - \frac{\delta}{2}\bigr) & |u| > \delta.
    \end{cases}\label{eq:Huber_loss}
\ee
Consequently,
\begin{align}
[\matOp{\Gamma}_R^*,\matOp{\Gamma}_S^*] = \arg\min_{[\matOp{\Gamma}_R,\matOp{\Gamma}_S]} \frac{1}{M_t\times M_r}\sum_{\vecOp{m}}\matOp{H}_\delta\Bigl(g[\vecOp{m}] - F[\matOp{\Gamma}_P^*, \matOp{\Gamma}_R,\matOp{\Gamma}_S](\vecOp{s}, \vecOp{r}_{m_r}, t_{m_t})\Bigr)
\label{eq:opt_rotation_morphology}
\end{align}
where $\matOp{\Gamma}_P^*$ are the optimal projectile motion parameters given by \eqref{eq:opt_trajectory_parameters}.

\section{Reconstruction algorithm}
\subsection{Initialization}
The manner in which the parameter estimates are initialized reflects the multi-stage nature of the algorithm. 

\subsubsection{Stage 1 initialization: Trajectories}
In keeping with \eqref{eq:ParticleTrajec}--\eqref{eq:ParticleMorpho}, the first initialization to consider is for the trajectory optimization step. The initial estimates are computed as independent and identically distributed (i.i.d.) samples from the probability distribution 
\be
    \vecOp{\eta}_n\sim\mathcal{N}\bigl(\vecOp{\mu}_\textnormal{traj}, \matOp{\Sigma}_\textnormal{traj}\bigr),\qquad n=1\ldots, N,\label{eq:random_traj_samples}
\ee
for some specified mean vector and covariance matrix $\vecOp{\mu}_\textnormal{traj}$ and $\matOp{\Sigma}_\textnormal{traj}$ respectively. Once sampled, the $N$ trajectory vectors are then optimized by solving \eqref{eq:opt_trajectory_parameters}.

To perform the optimization, placeholders for the $(\alpha_n, \matOp{U}_n, \vecOp{\Theta}_n)$ are required. Recalling the independence of rotation and morphology from projectile motion under \eqref{eq:ParticleTrajec_Hausdorff}, these parameters can be set to any value, provided $(\matOp{U}_n^\top\matOp{U}_n)^{-1}$ is symmetric positive definite. However, for the subsequent angular velocity initialization
it is convenient here to choose the $(\matOp{U}_n)$ to produce anisotropic Gaussian particles, with axis scales commensurate with the physical dimensions of the problem. An exact choice of values of the placeholders for $(\alpha_n, \vecOp{\Theta}_n, \matOp{U}_n)$, $\vecOp{\mu}_\textnormal{traj}$ and $\matOp{\Sigma}_\textnormal{traj}$ for a 2D example are given in Section~\ref{sec:num_example}.

To avoid settling on an optimized trajectory solution that is only a local minimum, the random sampling procedure \eqref{eq:random_traj_samples} is repeated $N_{\textnormal{traj}}$ times. The definitive optimized trajectory solution is then chosen as the one with the minimum terminated loss function value. Such an approach is practically feasible due to the per-time $\mathcal{O}(Nd)$ evaluation cost of the GMM trajectory forward model. Furthermore, accurate trajectory estimation is fundamental to the quality of the overall reconstruction; in the absence of a data-informed or deterministic alternative to \eqref{eq:random_traj_samples}, repeated trials are highly worthwhile.

\subsubsection{Stage 2 initialization: Rotation and attenuation}
Post-trajectory optimization, the projectile motion of each particle is known and fixed. Consequently, 
the sinogram contribution of any particle subset conditioned on the placeholders $(\alpha_n, \vecOp{\Theta}_n, \matOp{U}_n)$ is known and can be held constant, allowing the residual to isolate information about the remaining particles. 

Exploiting this property, the initialized rotation for the $n$-th particle $\rho_n$ is defined as that which minimizes the residual sinogram; i.e.
\be
    \argmin_{\vecOp{\Theta}_n}\biggl\|g[\vecOp{m}] - \sum_{\ell\neq n}F[\vecOp{\gamma}_\ell](\vecOp{s}, \vecOp{r}_{m_r}, t_{m_t}) - F[\vecOp{\gamma}_n(\vecOp{\Theta}_n)](\vecOp{s}, \vecOp{r}_{m_r}, t_{m_t})\biggr\|_{\ell^2}, \qquad n=1,\ldots, N.\label{eq:rot_initialization}
\ee
Note that \eqref{eq:rot_initialization} only optimizes over $\vecOp{\Theta}_n$ --- all other parameters are fixed for $\rho_n$, and all parameters are fixed for all other $\rho_\ell$, $\ell\neq n$. 
A necessary condition for $\vecOp{\Theta}_n$ to be identifiable from \eqref{eq:rot_initialization} is that the $\rho_n$ are anisotropic (isotropic $\rho_n$ would have infinitely many solutions). Provided the $(\matOp{U}_n)$ have been initialized as such in Stage~1, then all $(\alpha_n, \vecOp{\eta}_n, \matOp{U}_n)$ can remain fixed as they are. While the values of $(\alpha_n, \matOp{U}_n)$ currently proposed here are naive estimates that introduce bias, they can provide a sufficient heuristic for computing a reasonable initialization in $\vecOp{\Theta}_n$.

To approximate \eqref{eq:rot_initialization}, a grid search strategy over a grid of $N_\textnormal{rot}$ equally-spaced points within $[\omega_{\textnormal{min}}, \omega_{\textnormal{max}}]$ is performed for each component of $\vecOp{\Theta}_n\in\R^{d_R}$. This avoids the local minima issues that compromise a gradient descent strategy in highly oscillatory loss landscapes.
While an exhaustive grid search scales as $\mathcal{O}(N_\textnormal{rot}^{d_R})$, a coordinate-wise approach reduces the cost to $\mathcal{O}(d_RN_\textnormal{rot})$ at the price of missing cross-axis interactions when $d>2$ (a non-issue when $d=2$, since then $d_R=1$).
Whether this approximation is sufficiently accurate for initialization in higher dimensions is an empirical question for future investigation.



With the $(\vecOp{\Theta}_n)$ initialized, for the attenuation coefficients $(\alpha_n)$, let $\vecOp{\alpha} = (\alpha_1, \ldots, \alpha_N)$. The corresponding initialized values are defined to be the solution of
\be
    \argmin_{\vecOp{\alpha}}\|\matOp{\Phi}\vecOp{\alpha} - g[\vecOp{m}]\|_{\ell^2}^2,\label{eq:attenuation_init}
\ee
where the $n$-th column of $\matOp{\Phi}$ is the full sinogram of $\rho_n$ with $\alpha_n=1$ and $(\vecOp{\eta}_n, \matOp{U}_n, \vecOp{\Theta}_n)$ are fixed from the previous stages. 
To ensure non-negativity, \eqref{eq:attenuation_init} is approximately solved via least-squares with non-negativity enforced by projection \cite{lawson1974solving}. 

As the precision matrices $(\matOp{U}_n)$ are not directly fit to the projection data prior to Stage~2, the capacity for \eqref{eq:rot_initialization} and \eqref{eq:attenuation_init} to provide exact initializations is limited. However, these methods can provide an accurate enough starting point for Stage~2 to converge reliably in a small number of trials $N_\textnormal{morph}$. This is desirable due to the relatively high per-time evaluation cost $\mathcal{O}(Nd^2)$ of \eqref{eq:ParticleMorpho}. 

Lastly, we note that we do not initialize for the number of particles $N$ and assume it to be known, which in certain applications such as subsurface characterization would likely not be the case. This is a task in model order selection \cite{burnham2002model, stoica2004model} which we leave for future work.

\subsection{Pseudocode}
We summarize the tomographic reconstruction algorithm for GMM objects undergoing parabolic motion.
The algorithm, called GMM-CT, is structured as below, an open source implementation of GMM-CT is available at \url{https://github.com/dwb26/gmm-ct}.
\begin{description}
\item[Input:]\ 
\begin{itemize}
\item Dynamic sinogram: Time series of data $g[\vecOp{m}]=g[m_{\vecOp{r}},m_t]$.
\item Source and detector locations, and time samples: $(\vecOp{s},\vecOp{r}_{m_{\vecOp{r}}}, t_{m_t}) \in \Mlines \times [0, t_{\max}]$.
\item Number of particles: $N$.
\end{itemize}
\item[Output:] \ 
\begin{itemize}
\item Projectile motion parameters of particles: $\matOp{\Gamma}_P^*$.
\item Rotational motion parameters of particles: $\matOp{\Gamma}_R^*$.
\item Static morphological parameters of particles: $\matOp{\Gamma}_S^*$.
\end{itemize}
\end{description}
The output is computed following a two-stage procedure:
\begin{description}
\item[Stage 1:] Estimating motion (projectile trajectory) parameters $\matOp{\Gamma}_P^*$ 
\begin{itemize}
\item Estimate modes of the projection data $\{\vecOp{M}[g](t_{m_t})\}_{m_t=1}^{M_t}$.
\item Initialization of the projectile motion trajectory parameters $\{\vecOp{\eta}_n\}_{n=1}^N$.
\item Compute predicted modes of the trajectories $\{ {\widehat{\vecOp{r}}}[\vecOp{\eta}_n](t_{m_t})\}_{m_t=1}^{M_t}$.
\item Estimate optimal trajectory parameters $\matOp{\Gamma}_P^* = \{\vecOp{\eta}_n^*\}_{n=1}^N$ by solving \eqref{eq:opt_trajectory_parameters}.
\end{itemize}
\item[Stage 2:] Estimating rotational motion and morphological parameters $[\matOp{\Gamma}_R^*,\matOp{\Gamma}_S^*]$
\begin{itemize}
\item Initialization of morphological parameters $\matOp{\Gamma}_S$.
\item Initialization of angular velocity parameters $\matOp{\Gamma}_R$ via grid search over $[\omega_\ell, \omega_r]$.
\item Initialization of attenuation coefficients $\vecOp{\alpha}$ via least squares with enforced non-negativity \eqref{eq:attenuation_init}.
\item Estimate optimal rotational motion and morphology parameters $[\matOp{\Gamma}_R^*,\matOp{\Gamma}_S^*]$ by solving \eqref{eq:opt_rotation_morphology}.
\end{itemize}
\end{description}

To provide an approximation to the Hausdorff distance in \eqref{eq:opt_trajectory_parameters} the Hungarian algorithm is used \cite{kuhn1955hungarian}, in particular the SciPy implementation of the Jonker-Volgenant algorithm \cite{crouse2016implementing} due to the rectangular nature of the assignment problem. Furthermore, numerical experiments indicate that additional accuracy gains for the trajectories can be achieved using a Newton root--finder to estimate the $\vecOp{\eta}_n$ such that $\nabla_{\vecOp{r}} Q_n = \vecOp{0}$ in the proof of Proposition~\ref{prop:traj_to_modes_map} --- a step that can only be correctly applied once the assignments are established. The accuracy of the trajectory reconstruction is crucial to the success of the remaining reconstruction, making such accuracy gains highly worthwhile.

\section{Numerical results}\label{sec:num_example}
For the rest of our discussion, the units of all spatial variables should be considered in meters (m) and time should be considered in seconds (s).

\begin{figure}[!htbp]
    \centering
    \includegraphics[height=6.5cm]{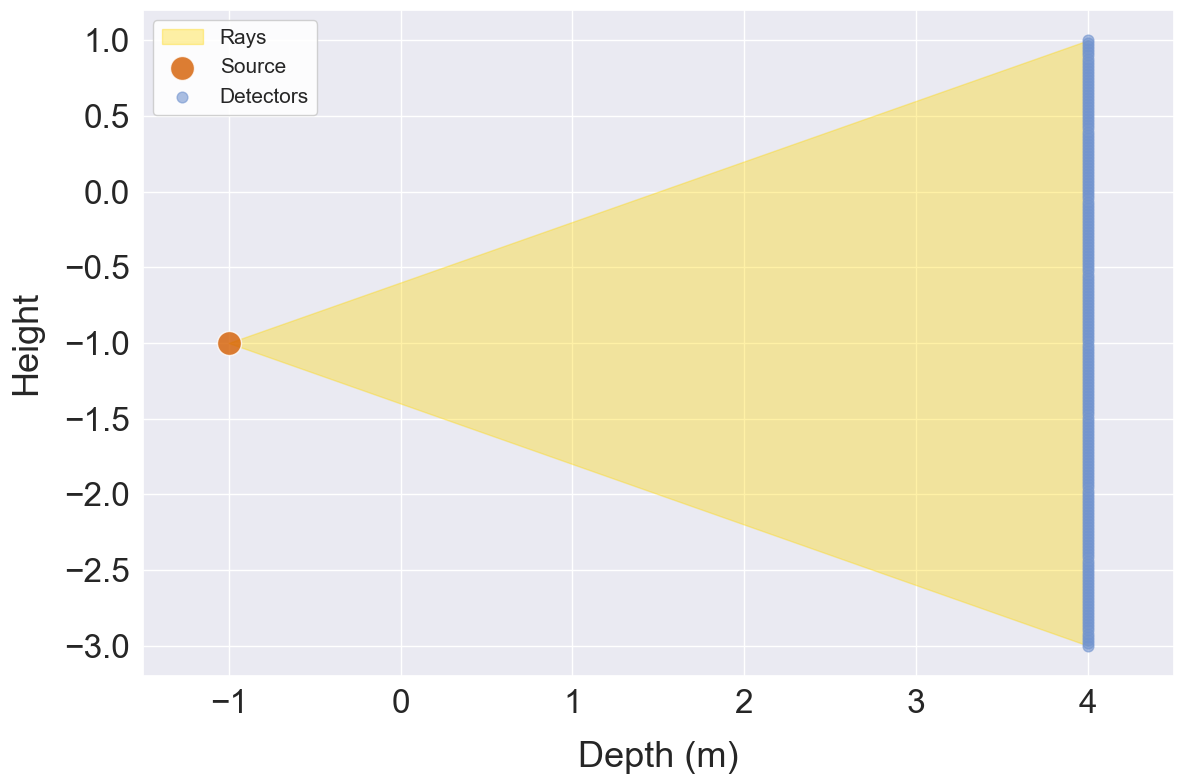}
    \caption{Acquisition geometry, with one source located at $(-1, 1)^\top$ and $M_r=128$ equally-spaced detectors placed between $(4, 1)^\top$ and $(4, -3)^\top$.}
    \label{fig:Acquisition geometry}
\end{figure}

Modeling a two-dimensional scenario, we consider a single source located at $(-1, -1)^\top$ and a vertical linear detector geometry at $4\times [-3,1]$ in $\R^2$ as shown in Figure~\ref{fig:Acquisition geometry}. The particles are assumed to be ejected from a known location $(1, 1)^\top$ with unknown velocities, subject to known gravitational acceleration $(0, -9.81)^\top$ $(\mathrm{m\,s^{-2}})$. Consequently, $\vecOp{C}[\vecOp{\eta}_n](t)$ is characterized by its unknown initial velocity $\vecOp{v}_n \in \R^d$. 

For $n = 1, \ldots, N$, the simulated GMM parameters are generated with respect to the following models.
\begin{itemize}
    \item \textbf{Attenuation:}
    $\alpha_n \sim \mathcal{N}\!\bigl(15 + 5(n-1),\, 1^2\bigr)$,
    constrained to $\alpha_n > 0$.

    \item \textbf{Skewness matrix:}
    $\matOp{U}_n \in \R^{d \times d}$ is upper triangular with
    \[
        [\matOp{U}_n]_{ij} \sim \begin{cases}
            \mathcal{U}(7.5,\; 25.5) & i = j, \\[4pt]
            \mathcal{N}(10,\; 1^2)   & i < j, \\[4pt]
            0                        & i > j,
        \end{cases}
    \]
    subject to $[\matOp{U}_n]_{ii}>0$ and a minimum anisotropy ratio
    $\max_i[\matOp{U}_n]_{ii}\,/\,\min_i[\matOp{U}_n]_{ii} \geq \delta_U$
    with $\delta_U=1.5$, enforced by rejection sampling.
    Near-isotropic Gaussians ($\delta_U \approx 1$) produce negligible
    rotation signature in the projections, making $\Theta_n$
    practically unidentifiable.

    \item \textbf{Angular velocity:}
    $\Theta_n \sim \mathcal{U}(\omega_\ell,\, \omega_r)$
    with $\omega_\ell = 2.0$ and $\omega_r = 6.0$ $(\mathrm{rad\,s^{-1}})$,
    subject to rejection of values for which the angular displacement
    per projection $\Theta_n\Delta t$ is within a 10\% guard-band
    of a half-turn multiple, which would render the rotation
    undetectable at the given sampling rate.

    \item \textbf{Initial position and acceleration:}
    $\vecOp{x}_n = (1,\,1)^\top$ and
    $\vecOp{a}_n = (0,\,-9.81)^\top$ $(\mathrm{m\,s^{-2}})$,
    shared and known for all $n$.

    \item \textbf{Initial velocity:}
    $\vecOp{v}_n$ $(\mathrm{m\,s^{-1}})$ is chosen deterministically as listed in
    Table~\ref{tab:params-comparison}, with horizontal velocity components
    satisfying $(\vecOp{v}_n)_1 > 0$ to ensure particles travel
    towards the detectors within the modeled gantry.
\end{itemize}

Based on samples from these distributions, the corresponding (noiseless) sinogram is observed and presented in Figure~\ref{fig:Projection data}.
\begin{figure}[!htbp]
    \centering
    \includegraphics[height=6.5cm]{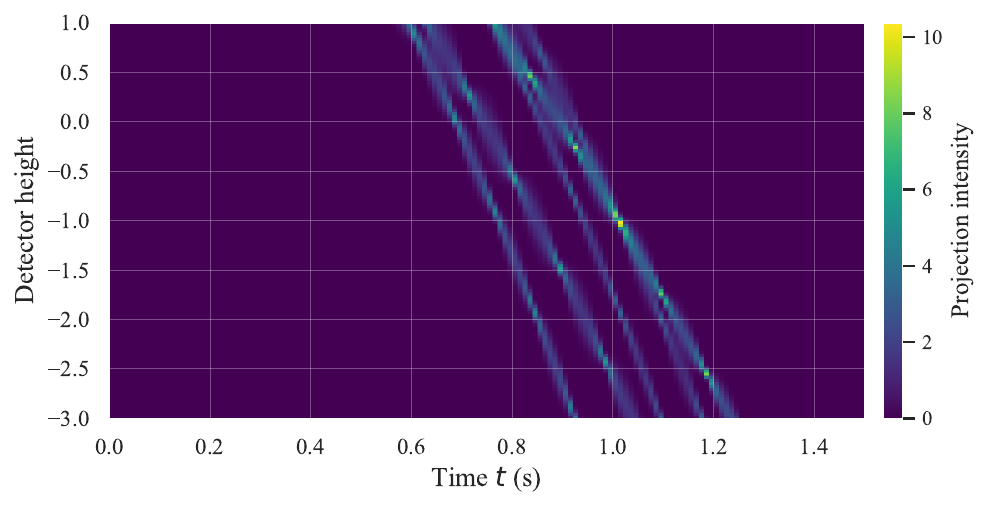}
    \caption{Projection data corresponding to $N=5$ Gaussian particles in projectile motion.}
    \label{fig:Projection data}
\end{figure}

For $n=1, \ldots, N$, the models used in the initialization are as follows.

\begin{itemize}
    \item \textbf{Initial velocity:}
    The initial i.i.d. sampled velocities for trajectory
    optimization are each drawn as
    \[
        \vecOp{v}_n \sim \mathcal{N}\!\bigl((1,\,1)^\top,\; 1.5^2\,\matOp{I}_2\bigr).
    \]

    \item \textbf{Morphology:}
    The skewness matrices are each initialized according to
    \bea
        \matOp{U}_n = \begin{bmatrix}
            30 & 0 \\
            0 & 15
        \end{bmatrix} + \matOp{E},
    \eea
    where $\matOp{E}\in\R^{2\times 2}$ is strictly upper triangular with entries given by i.i.d. samples from $\mathcal{N}(0, 1)$.

    \item \textbf{Attenuation:}
    $\alpha_n = 12.5$ for all $n$; \eqref{eq:attenuation_init} replaces these with least squares with enforced non-negativity estimates.

    \item \textbf{Angular velocity:}
    $\Theta_n = 0$ for all $n$. The Stage~2 initialization replaces these via
    a grid search over a specified mesh grid interval $[\omega_\ell, \omega_r]$ of $N_\textnormal{rot}=200$ equally spaced points, with $\omega_\ell = 2.0$ and $\omega_r = 6.0$ $(\mathrm{rad\,s^{-1}})$. In Stage 2 optimization, the
    first trial warm-starts $\Theta_n$ from this initialization,
    while each of the remaining $N_{\mathrm{morph}}-1$ trials draws
    $\Theta_n \sim \mathcal{U}(\omega_\ell, \omega_r)$ independently.
\end{itemize}

All of the remaining experiment hyperparameters are provided in Table~\ref{tab:hyperparam_table}, while the realized sampled parameter values and their corresponding reconstructed estimates are presented in Table~\ref{tab:params-comparison}.

\begin{table}[!htbp]
\centering
\caption{Numerical experiment hyperparameters.}
\label{tab:hyperparam_table}
{\small
    \begin{tabular}{rl >{\raggedright\arraybackslash}p{0.59\linewidth}}
        \textbf{Parameter} & \textbf{Value} & \textbf{Description} \\
        \toprule
        $M_t$ & $150$ & Number of projections. \\ [2pt]
        $(t_\textnormal{min}, t_\textnormal{max})$ & $(0, 1.5)$ & Minimum and maximum simulated time window values ($\mathrm{s}$). \\ [2pt]
        $M_r$ & $128$ & Number of detectors. \\ [2pt]
        $N_\textnormal{traj}$ & $20$ & Number of randomly initialized trajectory recovery trials. \\ [2pt]
        $N_\textnormal{rot}$ & $200$ & Number of rotation grid search points. \\ [2pt]
        $N_\textnormal{morph}$ & $3$ & Number of rotation/structure recovery trials. \\ [2pt]
        \bottomrule
    \end{tabular}
}    
\end{table}

Given the projection data, its modes at a time instance are inferred using a peak detection algorithm. The detected peaks for times $t=0.805$, $0.906$, $1.007$ and $1.107$ seconds are color coded and presented in four subplots shown on left hand side of Figure~\ref{fig:Estimated modes of the projection data.}. These modes --- along with the rest of the observed modes of the sinogram --- are presented on the right hand side of Figure~\ref{fig:Estimated modes of the projection data.}.

\begin{figure}[!htbp]
    \centering
    \includegraphics[width=0.995\linewidth]{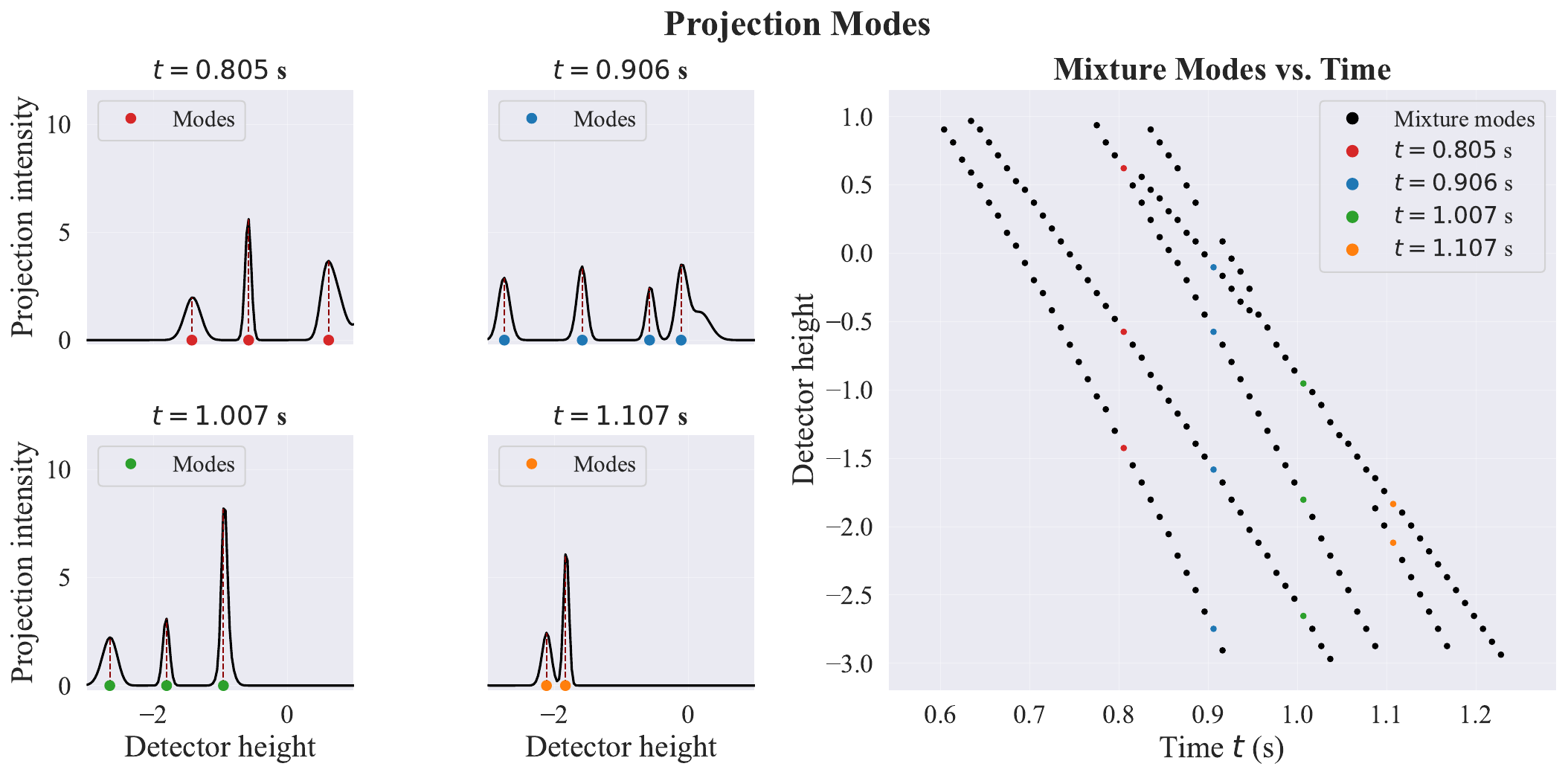}
    \caption{Projection data and corresponding color coded estimated modes at times $t=0.805s$ (red), $t=0.906s$ (blue), $t=1.007s$ (green) and $t=1.107s$ (orange) instances are shown on the small left four subplots. All of the observed modes with the color coded modes on left subplots are shown on the right subplot.}
    \label{fig:Estimated modes of the projection data.}
\end{figure}

Once the modes of the sinogram are computed, optimal trajectory parameters are estimated by minimization of \eqref{eq:opt_trajectory_parameters}, which we approximately solve using the L-BFGS method \cite[Ch.~7]{nocedal2006numerical}. In Figure~\ref{fig:projection_of_optimal_trajectories}, along with the observed mode data $\{\vecOp{M}[g](t_{m_t})\}_{m_t=1}^{M_t}$, the initial sampled and predicted trajectories for the optimal sample are presented in the left and right panels respectively. 

\begin{figure}[!htbp]
    \centering
    \includegraphics[width=0.995\linewidth]{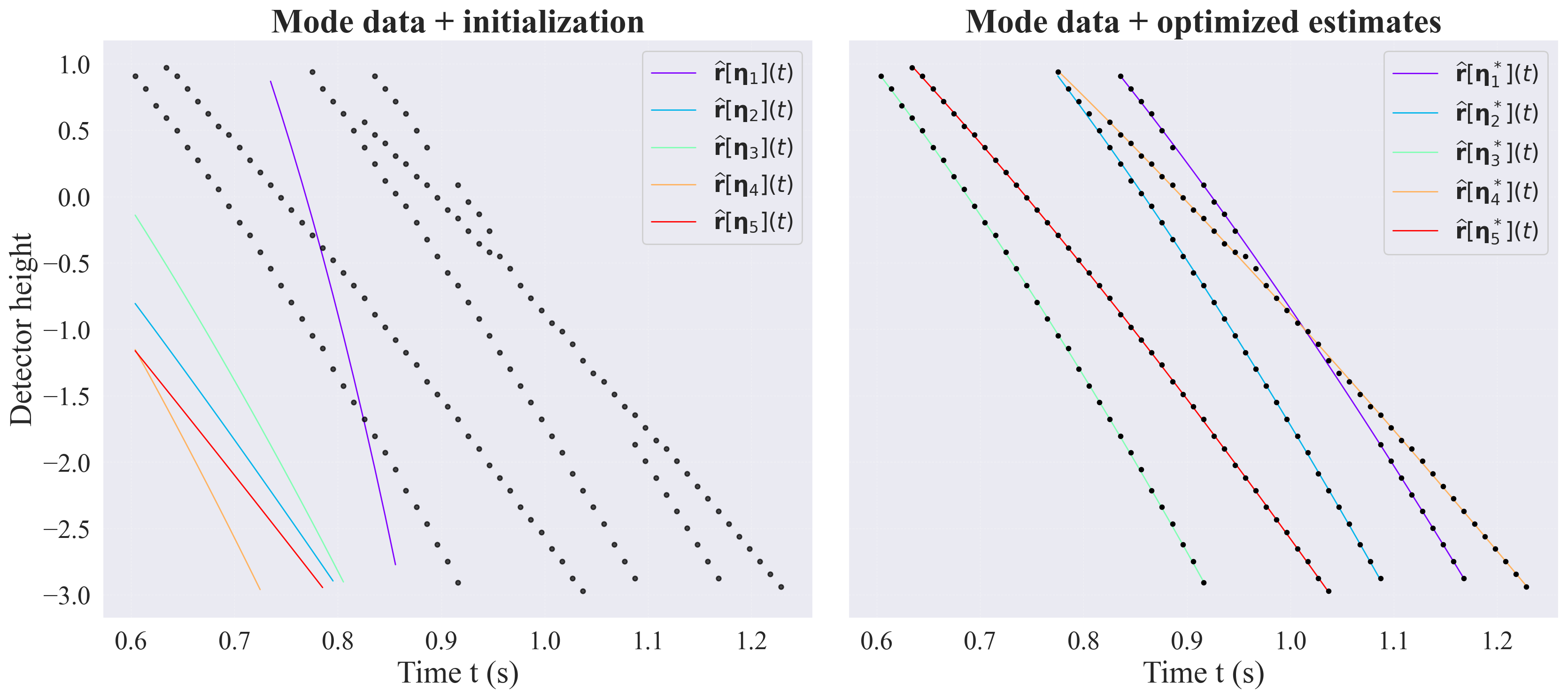}
    \caption{Left: The detected modes and the mode maps $\widehat{\vecOp{r}}[\vecOp{\eta}_n]$ resulting from the initial trajectory parameter sample. Right: The mode maps fit to the data to give the optimized trajectory parameter estimates $\vecOp{\eta}_n^*$, obtained by solving \eqref{eq:opt_trajectory_parameters}.
}
    \label{fig:projection_of_optimal_trajectories}
\end{figure}

The left column plots in both Figure~\ref{fig:init_recon_soln} and Figure~\ref{fig:optimal_joint_reconstruction} visualize the simulated, unobserved generated Gaussian particles at a collection of sampled projection acquisition times. The middle column in these figures includes the corresponding samples of the projection data that inform the reconstruction. The two figures differ only in the estimates plot in the right column panels and the corresponding projection data in the middle columns. 

In Figure~\ref{fig:init_recon_soln}, the Stage $2$-initialized particles are visualized prior to optimization. The projection plots in the central column capture the discrepancy in the current reconstruction state via the difference in projection shape.

In Figure~\ref{fig:optimal_joint_reconstruction}, the algorithm output demonstrates an accurate object reconstruction, as shown by the resemblance of the right column to the left, informed by an accurate fit in the projection data space in the middle columns.

The projection fit in this case is a sufficient criterion to inform reconstruction in the true, unobserved particle space. However, in general, an accurate projection fit is a necessary but not a sufficient criterion. If particles are sufficiently ``unobserved'' in the projections --- either due to a non-informative rate of angular velocity and/or projection acquisition rate, or due to particles obscuring one another, leading to an identifiability issue --- then reconstruction solutions become non-unique. There then exist multiple distinct GMM parameterizations that provide equally valid fits in projection space. 

An instance of this obscuring--type behavior can be observed at time $t=1.01$ in the left columns of Figures~\ref{fig:init_recon_soln}--\ref{fig:optimal_joint_reconstruction}, where despite the presence of four particles in the acquisition window, only three discernible peaks are observed in the projection data. That the obscured particles are able to be recovered in this example is necessarily partly down to the fact that there exist observation times for which the particles are discernible from one another in the sinogram.

With particle morphology being the primary reconstruction target, what is of most importance from an application perspective is the accuracy of the rigid body reconstruction of the particles. In Figure~\ref{fig:optimal_morphological_reconstruction} the five simulated, Stage~1-initialized, and reconstructed particles are plotted along the top, second and fourth rows respectively, with the corresponding pairwise $\log_{10}$ absolute errors visualized along the third and bottom rows. The output errors are all approximately in the order of $10^{-2}$ which, in the context of the simulated Gaussians having average peak amplitudes of $20$, represents a relative error of approximately $10^{-2} / 20\approx 0.05\%$ --- a substantial improvement on the analogous initial morphology error of approximately $10^{0} / 20\approx5\%$. Figure~\ref{fig:optimal_morphological_reconstruction} is therefore the strongest demonstration of the reconstruction success for this particle batch.

\begin{figure}[ht]
    \centering
    \includegraphics[width=\linewidth]{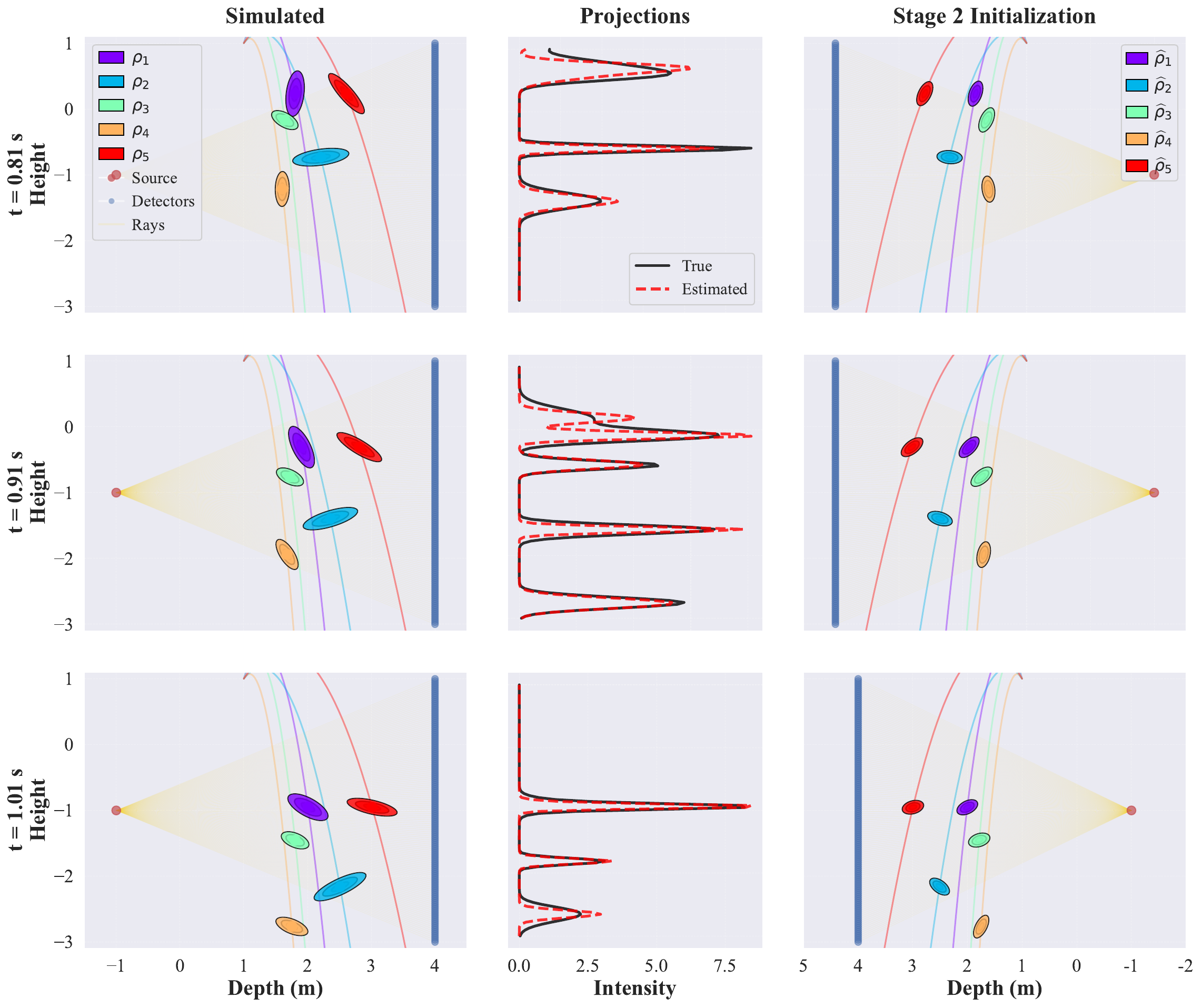}
    \caption{Left: The simulated particle states sampled at times $t = 0.81s$, $0.91s$, $1.01s$. The task is to reconstruct these particles. Right: The Stage~$2$ initialized estimates plotted as a mirror image. Note the trajectories are exactly those in the right column of Figure \ref{fig:projection_of_optimal_trajectories}, resulting from the Stage~1 optimization. Center: The combined projections of the two particle sets, by which the discrepancy informs the fit in the particles.
    }
    \label{fig:init_recon_soln}
\end{figure}

\begin{figure}[ht]
    \centering
    \includegraphics[width=\linewidth]{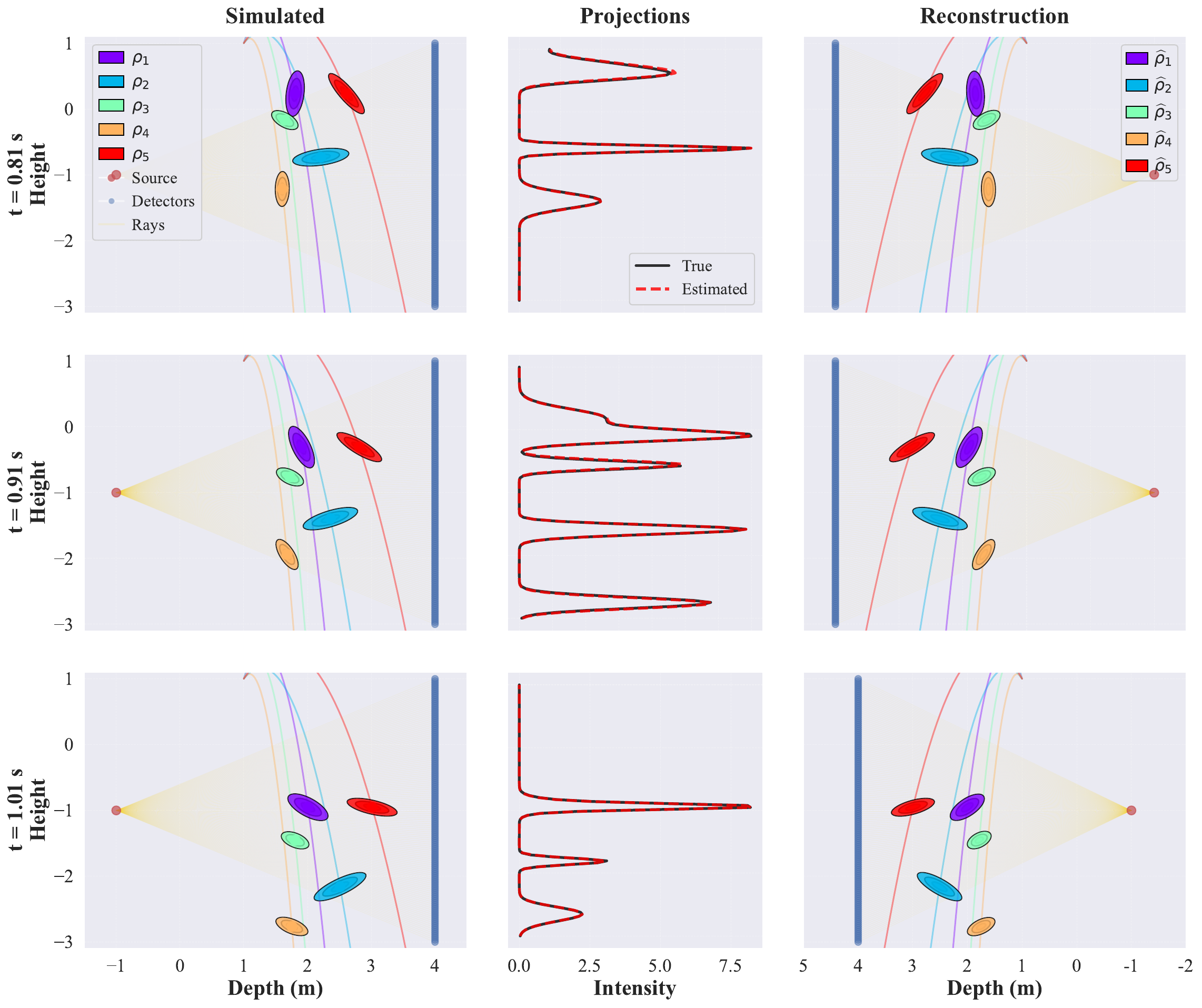}
    \caption{The same visualization scheme as in Figure~\ref{fig:init_recon_soln}, except here the right column plots are the output reconstructed particles produced by GMM-CT. The accurate fit in the projections in the center column are generally a necessary but not a sufficient condition for achieving an accurate reconstruction.}
    \label{fig:optimal_joint_reconstruction}
\end{figure}

\begin{table*}[!htbp]
  \centering
  \small
  \caption{Ground truth and estimated ($\widehat{\cdot}$) parameters for the
           $N = 5$ Gaussian mixture example, with estimates matched to ground truth via the
           assigned trajectories. All components share known and fixed
           initial position $\vecOp{x}_n = (1,\,1)^\top$ and
           gravitational acceleration $\vecOp{a}_n = (0,\,-9.81)^\top$.}
  \label{tab:params-comparison}
  \renewcommand{\arraystretch}{2.4}
  \setlength{\tabcolsep}{6pt}
  \begin{tabular}{c cc cc cc cc}
    \toprule
    & \multicolumn{2}{c}{Attenuation}
    & \multicolumn{2}{c}{Morphology}
    & \multicolumn{2}{c}{Rotation}
    & \multicolumn{2}{c}{Velocity} \\
    \cmidrule(lr){2-3}\cmidrule(lr){4-5}\cmidrule(lr){6-7}\cmidrule(lr){8-9}
    $n$ & $\alpha_n$ & $\widehat{\alpha}_n$
        & $\matOp{U}_n$ & $\widehat{\matOp{U}}_n$
        & $\Theta_n$ & $\widehat{\Theta}_n$
        & $\vecOp{v}_n$ & $\widehat{\vecOp{v}}_n$ \\
    \midrule
    1
      & $15.0447$ & $15.1269$
      & $\left(\begin{smallmatrix} 10.2751 & 10.3696 \\ 0 & 17.9620 \end{smallmatrix}\right)$
      & $\left(\begin{smallmatrix} 8.7032 & 1.5181 \\ 0 & 21.5173 \end{smallmatrix}\right)$
      & $5.9297$ & $5.8898$
      & $\left(\begin{smallmatrix} 1.00 \\ 3.00 \end{smallmatrix}\right)$
      & $\left(\begin{smallmatrix} 0.9965 \\ 2.9987 \end{smallmatrix}\right)$ \\

    2
      & $21.9112$ & $21.8846$
      & $\left(\begin{smallmatrix} 21.3949 & 11.3376 \\ 0 & 7.7181 \end{smallmatrix}\right)$
      & $\left(\begin{smallmatrix} 21.3533 & 11.3658 \\ 0 & 7.7190 \end{smallmatrix}\right)$
      & $5.1986$ & $5.1983$
      & $\left(\begin{smallmatrix} 1.50 \\ 1.80 \end{smallmatrix}\right)$
      & $\left(\begin{smallmatrix} 1.5007 \\ 1.8008 \end{smallmatrix}\right)$ \\

    3
      & $24.7690$ & $24.1672$
      & $\left(\begin{smallmatrix} 24.0554 & 11.2340 \\ 0 & 14.9712 \end{smallmatrix}\right)$
      & $\left(\begin{smallmatrix} 21.7305 & 12.5178 \\ 0 & 16.0011 \end{smallmatrix}\right)$
      & $5.0637$ & $5.0403$
      & $\left(\begin{smallmatrix} 0.80 \\ 2.50 \end{smallmatrix}\right)$
      & $\left(\begin{smallmatrix} 0.7795 \\ 2.5038 \end{smallmatrix}\right)$ \\

    4
      & $30.3459$ & $30.1427$
      & $\left(\begin{smallmatrix} 12.1598 & 9.2022 \\ 0 & 25.2080 \end{smallmatrix}\right)$
      & $\left(\begin{smallmatrix} 12.7446 & 11.3010 \\ 0 & 23.8680 \end{smallmatrix}\right)$
      & $5.9312$ & $5.9437$
      & $\left(\begin{smallmatrix} 0.75 \\ 1.20 \end{smallmatrix}\right)$
      & $\left(\begin{smallmatrix} 0.7444 \\ 1.1974 \end{smallmatrix}\right)$ \\

    5
      & $36.3180$ & $35.9642$
      & $\left(\begin{smallmatrix} 8.5057 & 12.9073 \\ 0 & 24.6596 \end{smallmatrix}\right)$
      & $\left(\begin{smallmatrix} 8.1141 & 10.8468 \\ 0 & 25.6805 \end{smallmatrix}\right)$
      & $5.4526$ & $5.4477$
      & $\left(\begin{smallmatrix} 2.00 \\ 3.00 \end{smallmatrix}\right)$
      & $\left(\begin{smallmatrix} 1.9848 \\ 2.9993 \end{smallmatrix}\right)$ \\
    \bottomrule
  \end{tabular}
\end{table*}

\begin{figure}[ht]
    \centering
    \includegraphics[width=\linewidth]{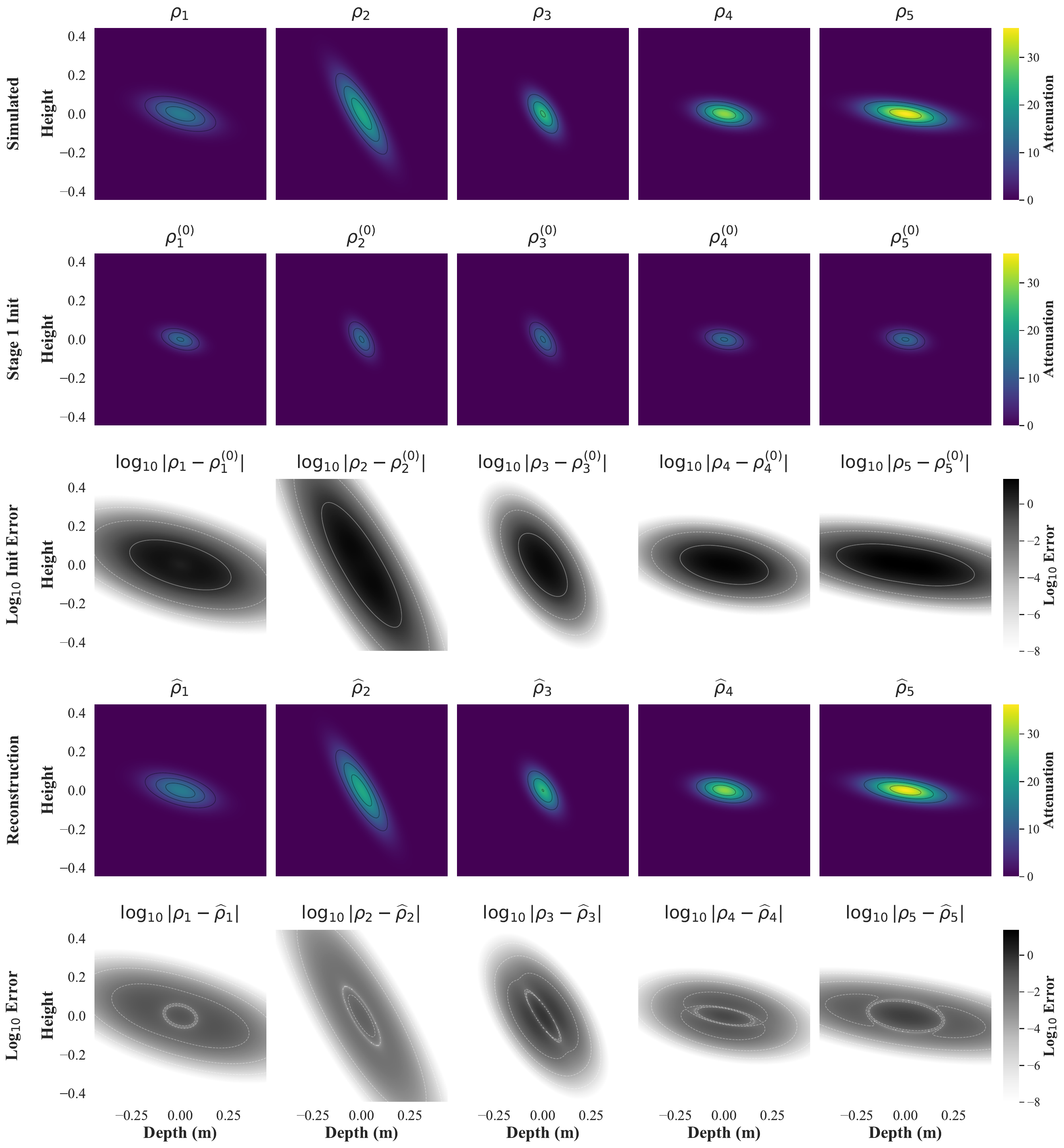}
    \caption{First row: the unknown, simulated particles. Second row: the Stage~$1$ initialized estimates (i.e. prior to the Stage~2 initialization or optimization). Third row: The corresponding $\log_{10}$ absolute errors. Fourth row: The reconstructed solutions. Fifth row: The corresponding $\log_{10}$ absolute errors. Throughout, each of the particles are oriented with respect to the rotational motion taken at $t=0$. 
    }
    \label{fig:optimal_morphological_reconstruction}
\end{figure}

\FloatBarrier
\section{Conclusion}
By utilizing an informative parametric spatiotemporal model, we have established a theoretical framework and accompanying algorithm (GMM-CT) for automating CT reconstruction. To maximize generality, the model has been designed to operate on linear combinations of a generic class of particles $(\rho_n)$ given by affine transformations of a canonical template function $\rho_0$. Due to its closed-form under the ray transform, $\rho_0$ was chosen as an isotropic Gaussian function, leading to a reconstruction framework based on GMM. GMM are an advantageous choice due to their ability to accurately approximate the mathematical representations of the objects to be reconstructed. One extension of the numerical experiments we have performed is to exploit this feature and attempt to recover more complicated objects in motion than individual Gaussians. In addition to reconstructing alternative objects, the model accommodates customization in motion parameterization and acquisition geometry beyond the examples considered here.

Applicable to Euclidean space of arbitrary dimension, the model propagates each particle with its own independent rotational motion, projectile motion and geometry. The role of the motion is crucial to facilitating the reconstruction process. In particular, the rotation allows different views of the particles to be captured as they traverse the gantry. However, motion alone is not enough to provide a reconstruction. A sufficiently large sampling rate is a necessary prerequisite, and even then, one could get unlucky if the viewing angle and projection times coincide in a way that does not provide a full view. Furthermore, multiple particles have the potential to obscure one another, leading to an identifiability issue and effectively reducing the number of informative projections.

For numerical validation, a $2$D, $5$-particle GMM case study was simulated and accurately reconstructed. Further informal empirical tests suggest this success is robust across various similar randomly generated GMM configurations. A strength of the way in which we have built GMM-CT is that all of the internal methods are immediately applicable to higher dimensional applications; an exciting next step is the transition to $3$D experiments.

A crucial additional complexity that we have not considered in this paper is the presence of noise in the projections. For GMM-CT to be a credible automation tool, its performance in noisy scenarios must be understood. In particular, the algorithm may need to be supplemented with additional methods to successfully detect the modes in the current trajectory optimization procedure. Further extensions include estimating the number of Gaussians $N$, the reconstruction of \emph{non}-rigid body objects, where the skewness matrix $\matOp{U}$ inherits a time dependency, and also extension ultimately to experiments based on real objects and projections. The framework presented here achieves successful reconstruction for the applications investigated and establishes a formal foundation for these future developments.

\FloatBarrier
\bibliographystyle{siamplain}
\bibliography{References.bib}

@article{TEMsim2011,
	author = {Hans Rullg{\aa}rd and Lars-G{\"o}ran {\"O}fverstedt and Sergei Masich and Bertil Daneholt and Ozan {\"O}ktem},
	doi = {10.1111/j.1365-2818.2011.03497.x},
	journal = {Journal of Microscopy},
	number = {3},
	pages = {234--256},
	title = {Simulation of transmission electron microscope images of biological specimens},
	volume = {243},
	year = {2011}}

@article{Louis:2011aa,
	author = {Alfred K. Louis},
	doi = {10.1088/0266-5611/27/6/065010},
	journal = {Inverse Problems},
	number = {6},
	pages = {065010},
	title = {Feature reconstruction in inverse problems},
	volume = {27},
	year = {2011}}

@article{Yuan:2025aa,
	author = {Yuan Yuan and Zhaojian Li and Bin Zhao},
	doi = {10.1145/3713070},
	journal = {ACM Computing Surveys},
	number = {7},
	pages = {1--34 (Article No.: 167)},
	title = {A Survey of Multimodal Learning: Methods, Applications, and Future},
	volume = {57},
	year = {2025}}

@article{Yin:2024aa,
	author = {Shukang Yin and Chaoyou Fu and Sirui Zhao and Ke Li and Xing Sun and Tong Xu and Enhong Chen},
	doi = {10.1093/nsr/nwae403},
	journal = {National Science Review},
	number = {12},
	pages = {nwae403},
	title = {A survey on multimodal large language models },
	volume = {11},
	year = {2024}}

@article{Liang:2024aa,
	author = {Paul Pu Liang and Amir Zadeh and Louis-Philippe Morency},
	doi = {10.1145/3656580},
	journal = {ACM Computing Surveys},
	number = {10},
	pages = {1--42 (Article No.: 264)},
	title = {Foundations \& Trends in Multimodal Machine Learning: Principles, Challenges, and Open Questions},
	volume = {56},
	year = {2024}}

@article{Oktem:Adler:2022aa,
	author = {Jonas Adler and Sebastian Lunz and Olivier Verdier and Carola-Bibiane Sch{\"o}nlieb and Ozan {\"O}ktem},
	doi = {10.1088/1361-6420/ac28ec},
	journal = {Inverse Problems},
	number = {7},
	pages = {075006},
	title = {Task adapted reconstruction for inverse problems},
	volume = {38},
	year = {2022}}

@article{zickert2022gaussian,
  title={{Gaussian} mixture model decomposition of multivariate signals},
  author={Zickert, Gustav and Yarman, Can Evren},
  journal={Signal, Image and Video Processing},
  volume={16},
  number={2},
  pages={429--436},
  year={2022},
  doi = {10.1007/s11760-021-01961-y},
  publisher={Springer}
}

@article{zickert2022joint,
  title={Joint {Gaussian} dictionary learning and tomographic reconstruction},
  author={Zickert, Gustav and {\"O}ktem, Ozan and Yarman, Can Evren},
  journal={Inverse Problems},
  volume={38},
  number={10},
  pages={105010},
  year={2022},
  doi = {10.1088/1361-6420/ac8bee},
  publisher={IOP Publishing}
}

@article{knauer2011directed,
  title={The directed {Hausdorff} distance between imprecise point sets},
  author={Knauer, Christian and L{\"o}ffler, Maarten and Scherfenberg, Marc and Wolle, Thomas},
  journal={Theoretical Computer Science},
  volume={412},
  number={32},
  pages={4173--4186},
  year={2011},
  doi = {10.1016/j.tcs.2011.01.039},
  publisher={Elsevier}
}

@article{taha2015efficient,
  title={An efficient algorithm for calculating the exact Hausdorff distance},
  author={Taha, Abdel Aziz and Hanbury, Allan},
  journal={IEEE transactions on pattern analysis and machine intelligence},
  volume={37},
  number={11},
  pages={2153--2163},
  year={2015},
  doi = {10.1109/TPAMI.2015.2408351},
  publisher={IEEE}
}

@article{charpiat2005approximations,
  title={Approximations of shape metrics and application to shape warping and empirical shape statistics},
  author={Charpiat, Guillaume and Faugeras, Olivier and Keriven, Renaud},
  journal={Foundations of Computational Mathematics},
  volume={5},
  number={1},
  pages={1--58},
  year={2005},
  doi = {10.1007/s10208-003-0094-x},
  publisher={Springer}
}

@article{lai2023toward,
  title={Toward the scientific interpretation of geophysical well logs: Typical misunderstandings and countermeasures},
  author={Lai, Jin and Wang, Guiwen and Fan, Qixuan and Zhao, Fei and Zhao, Xin and Li, Yuhang and Zhao, Yidi and Pang, Xiaojiao},
  journal={Surveys in Geophysics},
  volume={44},
  number={2},
  pages={463--494},
  year={2023},
  doi = {10.1007/s10712-022-09746-9},
  publisher={Springer}
}

@article{sun2025enhanced,
  title={Enhanced Lithology Classification Using an Interpretable {SHAP} Model Integrating Semi-Supervised Contrastive Learning and Transformer with Well Logging Data},
  author={Sun, Youzhuang and Pang, Shanchen and Li, Hengxiao and Qiao, Sibo and Zhang, Yongan},
  journal={Natural Resources Research},
  volume={34},
  number={2},
  pages={785--813},
  year={2025},
  doi = {10.1007/s11053-024-10452-z},
  publisher={Springer}
}

@misc{lithology_rock_types,
    author = {PetroWiki},
    title = {Lithology and rock type determination},
    doi = {10.2118/PW0558},
    publisher = {Society of Petroleum Engineers (SPE)}
}

@article{Fischer:6226,
      author = {Fischer, Andreas and Riesen, Kaspar and Bunke, Horst},
      title = {Improved quadratic time approximation of graph edit  distance by combining {Hausdorff} matching and greedy  assignment},
      journal = {Pattern Recognition Letters},
      volume = {87},
      year={2017},
      pages = {55--62},
      doi = {10.1016/j.patrec.2016.06.014},
}

@incollection{huber1992robust,
  title={Robust estimation of a location parameter},
  author={Huber, Peter J.},
  booktitle={Breakthroughs in statistics: Methodology and distribution},
  volume = {2},
  editor = {Kotz, Samuel and Johnson, Norman L.},
  pages={492--518},
  year={1992},
  doi={10.1007/978-1-4612-4380-9_35},
  publisher={Springer}
}

@article{kuhn1955hungarian,
  title={The {Hungarian} method for the assignment problem},
  author={Kuhn, Harold W.},
  journal={Naval research logistics quarterly},
  volume={2},
  number={1-2},
  pages={83--97},
  year={1955},
  doi = {10.1002/nav.3800020109},
  publisher={Wiley Online Library}
}

@article{crouse2016implementing,
  title={On implementing {2D} rectangular assignment algorithms},
  author={Crouse, David F.},
  journal={IEEE Transactions on Aerospace and Electronic Systems},
  volume={52},
  number={4},
  pages={1679--1696},
  year={2016},
  doi = {10.1109/TAES.2016.140952},
  publisher={IEEE}
}

@book{lawson1974solving,
  title={Solving least squares problems},
  doi = {10.1137/1.9781611971217},
  publisher = {Society for Industrial and Applied Mathematics},
  address = {Philadelphia, PA},
  series = {Classics in Applied Mathematics},
  volume = {15},
  author={Lawson, Charles L. and Hanson, Richard J.},
  year={1995}
}

@book{nocedal2006numerical,
  title={Numerical optimization},
  author={Nocedal, Jorge and Wright, Stephen J.},
  year={2006},
  edition = {2nd ed},
  series = {Springer Series in Operations Research and Financial Engineering},
  doi = {10.1007/978-0-387-40065-5},
  publisher={Springer},
  address = {New York, NY}
}

@article{yamada2024liobia,
  title={{LiOBIA}: Object-based cuttings image analysis for automated lithology evaluation},
  author={Yamada, Tetsushi and Di Santo, Simone and Bondabou, Karim and Prashant, Ajeet and Di Daniel, Andrea and Su, Laura and Francois, Matthias and Ouaaba, Khalid and Lockyer, Daniel and Prioul, Romain},
  journal = {Petrophysics},
  volume = {65},
  number = {4},
  pages={624--648},
  year={2024},
  doi = {10.30632/PJV65N4-2024a14}
}

@book{morton1993development,
  title={Development geology reference manual},
  editor={Morton-Thompson, Diana and Woods, Arnold M},
  year={1993},
  series = {AAPG Methods in Exploration Series},
  volume = {10},
  publisher={American Association of Petroleum Geologists}
}

@inproceedings{caja2019image,
  title={Image processing and machine learning applied to lithology identification, classification and quantification of thin section cutting samples},
  author={Caja, Miguel {\'A}ngel and Pe{\~n}a, Andrea Carolina and Campos, Jos{\'e} Rafael and Garc{\'\i}a Diego, Laura and Tritlla, Jordi and Bover-Arnal, Telm and Mart{\'\i}n-Mart{\'\i}n, Juan Diego},
  booktitle={SPE Annual Technical Conference and Exhibition},
  pages={D022S083R001},
  year={2019},
  doi = {10.2118/196117-MS},
  organization={SPE}
}

@article{Sadeghnejad2021DRPReview,
  author  = {Sadeghnejad, Saeid and Enzmann, Frieder and Kersten, Michael},
  title   = {Digital Rock Physics, Chemistry, and Biology: Challenges and Prospects of Pore-Scale Modelling Approach},
  journal = {Applied Geochemistry},
  year    = {2021},
  volume  = {131},
  pages   = {105028},
  doi     = {10.1016/j.apgeochem.2021.105028}
}

@article{Berg2017IndustrialDRP,
  author  = {Berg, Carl Fredrik and Lopez, Olivier and Berland, H{\aa}vard},
  title   = {Industrial Applications of Digital Rock Technology},
  journal = {Journal of Petroleum Science and Engineering},
  year    = {2017},
  volume  = {157},
  pages   = {131--147},
  doi     = {10.1016/j.petrol.2017.06.074}
}

@article{Andra2013DRP1,
  author  = {Andr{\"a}, Heiko and Combaret, Nicolas and Dvorkin, Jack and Glatt, Erik and Han, Junehee and Kabel, Matthias and Keehm, Youngseuk and Krzikalla, Fabian and Lee, Minhui and Madonna, Claudio and Marsh, Mike},
  title   = {Digital Rock Physics Benchmarks---Part I: Imaging and Segmentation},
  journal = {Computers \& Geosciences},
  year    = {2013},
  volume  = {50},
  pages   = {25--32},
  doi     = {10.1016/j.cageo.2012.09.005}
}

@article{Andra2013DRP2,
  author  = {Andr{\"a}, Heiko and Combaret, Nicolas and Dvorkin, Jack and Glatt, Erik and Han, Junehee and Kabel, Matthias and Keehm, Youngseuk and Krzikalla, Fabian and Lee, Minhui and Madonna, Claudio and Marsh, Mike},
  title   = {Digital Rock Physics Benchmarks---Part II: Computing Effective Properties},
  journal = {Computers \& Geosciences},
  year    = {2013},
  volume  = {50},
  pages   = {33--43},
  doi     = {10.1016/j.cageo.2012.09.008}
}

@article{Madonna2012DRP,
  author  = {Madonna, Claudio and Almqvist, Bjarne S. G. and Saenger, Erik H.},
  title   = {Digital Rock Physics: Numerical Prediction of Pressure-Dependent Ultrasonic Velocities Using Micro-CT Imaging},
  journal = {Geophysical Journal International},
  year    = {2012},
  volume  = {189},
  number  = {3},
  pages   = {1475--1482},
  doi     = {10.1111/j.1365-246X.2012.05437.x}
}

@inproceedings{Lenormand2007Cuttings,
  author    = {Lenormand, Roland and Egermann, Patrick and Bouillot, Jean},
  title     = {Porosity and Permeability from Drill Cuttings},
  booktitle = {Proceedings of the 3rd EAGE North African/Mediterranean Petroleum and Geosciences Conference and Exhibition},
  year      = {2007},
  publisher = {European Association of Geoscientists \& Engineers},
  pages     = {cp--16},
  doi       = {10.3997/2214-4609.20146518}
}

@article{Rassenfoss2011DigitalRocks,
  author  = {Rassenfoss, Stephen},
  title   = {Digital Rocks Out to Become a Core Technology},
  journal = {Journal of Petroleum Technology},
  year    = {2011},
  volume  = {63},
  number  = {5},
  pages   = {36--41},
  doi     = {10.2118/0511-0036-JPT}
}

@article{Cong2023ThinInterbedded,
  author  = {Cong, Richao and Yang, Ruiyue and Li, Gensheng and Huang, Zhongwei and Gong, Yanjin and Jing, Meiyang and Lu, Meiquan},
  title   = {Geomechanical Properties of Thinly Interbedded Rocks Based on Micro- and Macro-Scale Measurements},
  journal = {Rock Mechanics and Rock Engineering},
  year    = {2023},
  volume  = {56},
  number  = {8},
  pages   = {5657--5675},
  doi     = {10.1007/s00603-023-03360-w}
}

@article{Chung2019MicroCTCement,
  author  = {Chung, Sang-Yeop and Kim, Ji-Su and Stephan, Dietmar and Han, Tong-Seok},
  title   = {Overview of the Use of Micro-Computed Tomography (Micro-CT) to Investigate the Relation Between the Material Characteristics and Properties of Cement-Based Materials},
  journal = {Construction and Building Materials},
  year    = {2019},
  volume  = {229},
  pages   = {116843},
  doi     = {10.1016/j.conbuildmat.2019.116843}
}

@article{Sanchez2017EDXASTomo,
  author  = {Sanchez, Dario Ferreira and Simionovici, Alexandre S. and Lemelle, Laurence and Cuartero, Vera and Mathon, Olivier and Pascarelli, Sakura and Bonnin, Anne and Shapiro, Russell and Konhauser, Kurt and Grolimund, Daniel and Bleuet, Pierre},
  title   = {2D/3D Microanalysis by Energy Dispersive X-ray Absorption Spectroscopy Tomography},
  journal = {Scientific Reports},
  year    = {2017},
  volume  = {7},
  pages   = {16453},
  doi     = {10.1038/s41598-017-16345-x}
}

@article{Wang2020MicroCTGeomet,
  author  = {Wang, Yuan and Miller, James D.},
  title   = {Current Developments and Applications of Micro-CT for the 3D Analysis of Multiphase Mineral Systems in Geometallurgy},
  journal = {Earth-Science Reviews},
  year    = {2020},
  volume  = {211},
  pages   = {103406},
  doi     = {10.1016/j.earscirev.2020.103406}
}

@article{Rahmani2016MultiscaleShaleCuttings,
  author  = {Rahmani, Reza and Ferrell, Ray E. and Smith, John Rogers},
  title   = {Multiscale Imaging of Fixed-Cutter-Drill-Bit-Generated Shale Cuttings},
  journal = {SPE Reservoir Evaluation \& Engineering},
  year    = {2016},
  volume  = {19},
  pages   = {196--204}
}

@article{Mudde2008TimeResolvingCT,
  author  = {Mudde, R. F. and Alles, J. and van der Hagen, T. H. J. J.},
  title   = {Feasibility Study of a Time-Resolving X-ray Tomographic System},
  journal = {Measurement Science and Technology},
  year    = {2008},
  volume  = {19},
  number  = {8},
  pages   = {085501},
  doi     = {10.1088/0957-0233/19/8/085501}
}

@article{Graas2024TimeResolvedFluidizedBeds,
  author  = {Graas, A. B. and Wagner, E. C. and van Leeuwen, T. and van Ommen, J. R. and Batenburg, K. J. and Lucka, F. and Portela, L. M.},
  title   = {X-ray Tomography for Fully-3D Time-Resolved Reconstruction of Bubbling Fluidized Beds},
  journal = {Powder Technology},
  year    = {2024},
  volume  = {434},
  pages   = {119269},
  doi     = {10.1016/j.powtec.2023.119269}
}

@inproceedings{fathi2025bi,
  title={Bi-level optimization and implicit differentiation as a framework for optimal experimental design in tomography},
  author={Fathi, Hamid and Skorikov, Alexander and van Leeuwen, Tristan},
  booktitle={10th International Conference on Scale Space and Variational Methods in Computer Vision ({SSVM 2025})},
  editor = {Bubba, Tatiana A. and Gaburro, Romina and Gazzola, Silvia and Papafitsoros, Kostas and Pereyra, Marcelo and Sch\"onlieb, C.-B.},
  pages={123--135},
  year={2025},
  doi = {10.1007/978-3-031-92369-2_10},
  organization={Springer}
}

@article{Chen2023GMMCryoEM,
  author  = {Chen, Muyuan and Schmid, Michael F. and Chiu, Wah},
  title   = {Improving Resolution and Resolvability of Single-Particle Cryo-EM Structures Using Gaussian Mixture Models},
  journal = {Nature Methods},
  year    = {2023},
  volume  = {21},
  pages   = {37--40},
  doi     = {10.1038/s41592-023-02082-9}
}

@misc{Bafna:2025aa,
	archiveprefix = {arXiv},
	author = {Mihir Bafna and Bowen Jing and Bonnie Berger},
    doi = {10.48550/arXiv.2509.01038},
	eprint = {2509.01038},
	primaryclass = {q-bio.BM},
	title = {Learning residue level protein dynamics with multiscale {Gaussians}},
	year = {2025}}

@misc{qu2025gem,
	archiveprefix = {arXiv},
	author = {Qu, Huaizhi and Wang, Xiao and Zhang, Gengwei and Peng, Jie and Chen, Tianlong},
    doi = {10.48550/arXiv.2509.25075},
	eprint = {2509.25075},
	primaryclass = {cs.CV},
	title = {{GEM}: {3D} {Gaussian} Splatting for Efficient and Accurate {Cryo-EM} Reconstruction},
	year = {2025}}

@misc{chen2025cryosplat,
	archiveprefix = {arXiv},
	author = {Chen, Suyi and Ling, Haibin},
    doi = {10.48550/arXiv.2508.04929},
	eprint = {2508.04929},
	primaryclass = {eess.IV},
	title = {{CryoSplat}: {Gaussian} Splatting for {Cryo-EM} Homogeneous Reconstruction},
	year = {2025}}

@article{Batenburg2011GMMET,
  author  = {Batenburg, Kees J. and others},
  title   = {Model-Based Electron Tomography Using Gaussian Mixture Priors},
  journal = {Ultramicroscopy},
  year    = {2011},
  volume  = {111},
  number  = {3},
  pages   = {172--180},
  doi     = {10.1016/j.ultramic.2010.10.004}
}

@misc{Zhang2026GaussianSTEM,
	archiveprefix = {arXiv},
	author = {Zhang, Beiyuan and Li, Hesong and Shao, Ruiwen and Fu, Ying},
	doi = {10.48550/arXiv.2604.04693},
	eprint = {2604.04693},
	primaryclass = {cs.CV},
	title = {{3D} {Gaussian} Splatting for Annular Dark-Field Scanning Transmission Electron Microscopy Tomography Reconstruction},
	year = {2026}}

@misc{Halacheva2025GaussianVLM,
  author  = {Halacheva, Anna-Maria and Zaech, Jan-Nico and Wang, Xi and Paudel, Danda Pani and Van Gool, Luc},
  title   = {{GaussianVLM}: Scene-Centric {3D} Vision-Language Models using Language-Aligned {Gaussian} Splats},
  journal = {arXiv preprint},
  year    = {2025},
  primaryclass = {cs.CV},
  eprint  = {2507.00886},
  doi = {10.48550/arXiv.2507.00886},
  archivePrefix = {arXiv}
}

@misc{Sun2024EmbodiedGS,
	archiveprefix = {arXiv},
	author = {Sun, Jing and Yang, Zhen and Jensfelt, Patric},
	doi = {10.48550/arXiv.2406.03855},
	eprint = {2406.03855},
	primaryclass = {cs.CL},
	title = {Physically Embodied {Gaussian} Splatting: A Real-Time Correctable World Model for Robotics},
	year = {2024}}

@misc{Kong2024Survey3DGSRobotics,
	archiveprefix = {arXiv},
    author  = {Kong, Dezhi and Wang, Guangming and Wang, Hesheng},
	doi = {10.48550/arXiv.2410.12262},
	eprint = {2410.12262},
	primaryclass = {cs.RO},
    title   = {{3D} {Gaussian} Splatting in Robotics: A Survey},
	year = {2024}}

@article{Wu2023NeuralGaussianFields,
  author  = {Wu, Tong and Chen, Zhiqin and Liu, Yebin},
  title   = {Neural Gaussian Fields for View Synthesis},
  journal = {IEEE Transactions on Visualization and Computer Graphics},
  year    = {2023},
  doi     = {10.1109/TVCG.2023.3274982}
}

@article{Kerbl2023GaussianSplatting,
  author  = {Kerbl, Bernhard and Kopanas, Georgios and Leimk{\"u}hler, Thomas and Drettakis, George},
  title   = {3D Gaussian Splatting for Real-Time Radiance Field Rendering},
  journal = {ACM Transactions on Graphics},
  year    = {2023},
  volume  = {42},
  number  = {4},
  pages   = {1--14},
  doi     = {10.1145/3592433}
}

@article{Xu2023SurfelsNeRF,
  author  = {Xu, Yida and Hu, Shoukang and Wang, Yue and Liu, Yebin},
  title   = {Surfels-NeRF: Neural Surfels for Rendering and Reconstruction},
  journal = {ACM Transactions on Graphics},
  year    = {2023},
  volume  = {42},
  number  = {4}
}

@article{Bouaziz2025GaussianNeuralFields,
  author  = {Bouaziz, Abdelaziz and Laga, Hamid and Wannous, Hazem and Sohel, Ferdous},
  title   = {Gaussian Neural Fields for Multidimensional Signal Representation and Reconstruction},
  journal = {Computer Graphics Forum},
  year    = {2025}
}

@misc{Gao2025NeRFReview,
	archiveprefix = {arXiv},
	author = {Gao, Kyle and Gao, Yina and He, Hongjie and Lu, Dening and Xu, Linlin and Li, Jonathan},
    doi = {10.48550/arXiv.2210.00379},
	eprint = {2210.00379},
	primaryclass = {cs.CV},
	title = {{NeRF}: Neural Radiance Field in {3D} Vision: A Comprehensive Review (Updated Post-{Gaussian} Splatting)},
	year = {2026}}

@article{Yao2026NeRF,
  author  = {Mingyuan Yao and Yukang Huo and Yang Ran and Qingbin Tian and Ruifeng Wang and Haihua Wang},
  title   = {Neural Radiance Field-based Visual Rendering: a Comprehensive Review},
  journal = {IEEE Transactions on Visualization and Computer Graphics},
  note = {Early access},
  doi = {10.1109/TVCG.2026.3677182},
  year = {2026}
}

@book{burnham2002model,
  title={Model selection and multimodel inference: a practical information-theoretic approach},
  author={Burnham, Kenneth P and Anderson, David R},
  year={2002},
  publisher={Springer}
}

@article{stoica2004model,
  title={Model-order selection: a review of information criterion rules},
  author={Stoica, Petre and Selen, Yngve},
  journal={IEEE signal processing magazine},
  volume={21},
  number={4},
  pages={36--47},
  year={2004},
  publisher={IEEE}
}

\section*{Acknowledgments}
The authors acknowledge the use of large language models for technical editing and linguistic refinement of this manuscript. Additionally, Anthropic Claude Sonnet 4.6 was utilized as a support tool in the development of the numerical code used for the simulations. The authors take full responsibility for the integrity of the final manuscript and correctness of the associated software.

\end{document}